\documentclass[11pt]{amsart}

\usepackage[T1]{fontenc}
\usepackage{lmodern}
\usepackage{amsmath,amssymb,mathtools}
\usepackage{graphicx}
\usepackage{tikz}
\usepackage{booktabs}
\usepackage{microtype}
\usepackage{flafter}
\usepackage{placeins}
\usepackage{amsrefs}
\usepackage{hyperref}
\usepackage[nameinlink,capitalize]{cleveref}

\newtheorem{theorem}{Theorem}[section]
\newtheorem{proposition}[theorem]{Proposition}
\newtheorem{lemma}[theorem]{Lemma}
\newtheorem{corollary}[theorem]{Corollary}
\theoremstyle{definition}
\newtheorem{definition}[theorem]{Definition}
\theoremstyle{remark}
\newtheorem{remark}[theorem]{Remark}

\DeclareMathOperator{\ext}{ext}

\newcommand{\R}{\mathbb R}
\newcommand{\OO}{\mathfrak O}
\newcommand{\BO}{\mathsf B_{\OO}}

\newcommand{\Ptwo}{\mathcal P({}^2\R^2)}
\newcommand{\normO}[1]{\lVert #1\rVert_{\OO}}
\newcommand{\bilnorm}[1]{\lVert #1\rVert_{\OO,\mathrm{bil}}}
\newcommand{\Bern}{\mathfrak B_{\OO}}
\newcommand{\Mark}{\mathfrak M_{\OO}}
\newcommand{\Unc}{\mathbf u_{\mathrm{can}}(\OO)}

\usepackage[left=3.33cm,top=3cm,right=3.33cm,bottom=3.0cm]{geometry}

\title[Classical inequalities on an octagonal sector]
{Classical Polynomial Inequalities for Quadratic Forms on an Octagonal Sector}

\author[Han]{Manwook Han}
\address[Manwook Han]{\mbox{}\newline\indent  Department of Mathematics,\newline\indent Chungbuk National University\newline\indent Cheongju, Chungbuk 28644, \newline\indent Republic of Korea}
\email{mwhan0828@gmail.com}

\author[Kim]{Sun Kwang Kim}
\address[Sun Kwang Kim]{\mbox{}\newline\indent  Department of Mathematics,\newline\indent Chungbuk National University\newline\indent Cheongju, Chungbuk 28644, \newline\indent Republic of Korea}
\email{skk@chungbuk.ac.kr}

\author[Seoane]{Juan B. Seoane--Sep\'ulveda}
\address[Juan B. Seoane--Sep\'ulveda]{\mbox{}\newline\indent  Instituto de Matem\'atica Interdisciplinar (IMI) \newline\indent and \newline\indent Departamento de An\'alisis Matem\'atico y Matem\'atica Aplicada, \newline\indent Facultad de Ciencias Matem\'aticas, \newline\indent Universidad Complutense de Madrid, \newline\indent Plaza de Ciencias 3, \newline\indent 28040 Madrid, Spain}
\email{jseoane@ucm.es}

\thanks{Corresponding author: Sun Kwang Kim.}

\subjclass[2020]{Primary 41A17, 26D05; Secondary 46G25, 52A21, 46B04}
\keywords{Bernstein--Markov inequalities, polarization constants,
unconditional constants, coefficient inequalities, quadratic forms}

\begin{document}

\begin{abstract}
We establish a collection of sharp inequalities for real quadratic forms on the
first-quadrant sector of a regular octagon.  Starting from the complete
extreme-point description of the associated polynomial unit ball, we compute the
exact pointwise Bernstein function for the Euclidean gradient.  One extreme curve
controls the problem: its endpoint is active up to slope $1/2$, after which the
maximizer follows an explicit Cardano branch.  We obtain the sharp Markov constant
$2\sqrt5$, the exact relative quadratic polarization constant $2$, the canonical
unconditional constant $3$, and the body-relative Bohr radius $1/\sqrt3$.  We also
determine the optimal coefficient $\ell_q$-comparison for every
$1\le q\le\infty$.  The same norm-one polynomial is extremal for all these global
constants.  At $q=4/3$ the result is a sharp fixed-space coefficient inequality of
\textit{Bohnenblust--Hille type}.
\end{abstract}

\maketitle
\enlargethispage{4pt}

\section{Introduction}\label{sec:introduction}

An exact norm formula in a low-dimensional polynomial space is often the
beginning, rather than the end, of the analysis.  It can reveal the geometry of the
unit sphere, identify the extreme points of the unit ball, and convert sharp
polynomial inequalities into explicit finite-dimensional optimization problems.
Konheim and Rivlin gave an early characterization of extreme polynomials in the
unit ball of real polynomials bounded on an interval \cite{KonheimRivlin}.  For
quadratic polynomials, Aron and Klimek developed a particularly effective program:
compute the norm, determine a planar projection of the unit ball, parametrize the
unit sphere, and then classify its extreme points \cite{AronKlimek}.  This program
has since been implemented in many three-dimensional polynomial spaces; systematic
accounts, further examples, and applications appear in \cites{FGMMRS,GJMMMS}.

The convex mechanism behind the applications is the finite-dimensional
Krein--Milman reduction.  Once the extreme set is known, the maximum of every
continuous convex functional on the polynomial unit ball can be sought there.
Thus an optimization over an entire three-dimensional body is replaced by finitely
many isolated points and one-parameter families.  This method has produced sharp
derivative inequalities, polarization and unconditional constants, and coefficient
estimates for the simplex, the square, circular sectors, spaces of trinomials, and
polygonal norms; see, for example,
\cites{AJMS,AMRS,GMSS,GMSU,JMPS,KimHex,MRS}.  Closely related polynomial and
bilinear geometries for symmetric octagonal norms were studied in
\cites{KimOctBil,KimOctGeom}.

The derivative questions considered here belong to the classical
Bernstein--Markov tradition.  A detailed historical account, beginning with
Mendeleev's quadratic problem and ending with the exact low-degree pointwise
Bernstein functions, is given at the start of \cref{sec:bernstein}.  Multivariate
and infinite-dimensional extensions have been developed for convex bodies and
Banach spaces in
\cites{FSMS,Ganzburg,Harris,Kroo2026,KrooRevesz,MilevRevesz,MuSar,Sarantopoulos,Wilhelmsen}.
The terminology records two different extremal questions: a Bernstein function is
the best derivative bound at a prescribed point, whereas a Markov constant is the
maximum of that pointwise function over the whole norming body.

Polarization supplies a second motivation.  Let $X$ be a real or complex Banach
space, let $P$ be a continuous $n$-homogeneous polynomial on $X$, and let
$\check P$ be its unique symmetric $n$-linear polar.  With both norms taken on the
unit ball of $X$, Martin proved in his 1932 thesis that
\begin{equation}\label{eq:Martin-introduction}
 \|P\|_X\le \|\check P\|_X\le \frac{n^n}{n!}\,\|P\|_X.
\end{equation}
The first inequality follows by evaluating the polar on the diagonal.  The second
is universal---its factor is independent of both the dimension and the Banach
space---and is generally optimal: equality occurs on $\ell_1^n$ for the Nachbin
polynomial $N_n(x_1,\ldots,x_n)=x_1\cdots x_n$
\cites{Martin,GJMMMS}.  In degree two, Martin's factor is $2$.  An unbalanced
convex body is not a Banach-space unit ball, so \eqref{eq:Martin-introduction} does
not apply to it.  Nevertheless, the polynomial and multilinear supremum norms
induced by the same body can still be compared.  The known relative quadratic
constants are $3$ for the simplex and $3/2$ for the square
\cites{MRS,GMSS,GJMMMS}; the octagonal sector studied here realizes the intervening
value $2$ exactly.

Unconditionality leads to a third, genuinely different, coefficient problem.  For
a polynomial written in the canonical monomial basis, one asks how much its norm
can increase when coefficients are independently suppressed or their signs are
changed.  On a body contained in the positive quadrant, this is equivalent to
comparing $P$ with the polynomial obtained by replacing every coefficient by its
absolute value.  This modulus formulation and its relation to sharp polynomial
inequalities were developed in \cite{GMSU}.  Polynomial supremum norms are usually
non-absolute, so the canonical unconditional constant contains geometric
information not captured by the separate coefficient bounds.

The Bohnenblust--Hille theory is related to coefficient estimates but must be kept
conceptually separate.  The classical multilinear inequality asks, for each degree
$m$, for one constant valid simultaneously in every dimension $N$ for all
$m$-linear forms on $(\ell_\infty^N)^m$; equivalently, it is an
infinite-dimensional estimate on $c_0$.  Its degree-two case is Littlewood's
$4/3$ inequality \cite{Littlewood}, whose optimal real multilinear constant is
$\sqrt2$ \cite{DinizEtAl}.  The polynomial Bohnenblust--Hille inequality has the
same uniform-in-$N$ character and the critical exponent $2m/(m+1)$
\cites{BohnenblustHille,DefantEtAl}.  By contrast, the present coefficient space has
fixed dimension three, and its norm is defined on an unbalanced body rather than an
$\ell_\infty$ ball.  We determine the exact equivalence constants for all
coefficient $\ell_q$-norms.  The case $q=4/3$ is therefore described only as an
inequality \emph{of Bohnenblust--Hille type}: it uses the classical quadratic
exponent, but it is neither the original multilinear inequality nor the classical
polynomial Bohnenblust--Hille inequality.

Throughout, a \emph{convex body} in $\R^d$ means a compact convex set with nonempty
interior.  It is \emph{balanced} if $K=-K$.  We work on the unbalanced convex body
\begin{equation}\label{eq:body}
 \OO:=\{(x,y)\in[0,1]^2:x+y\le\sqrt2\}.
\end{equation}
It is the first-quadrant sector of the centrally symmetric regular octagon
\[
 \widetilde{\OO}:=\{(x,y)\in\R^2:|x|\le1,\ |y|\le1,\ |x|+|y|\le\sqrt2\}.
\]
Put
\begin{equation}\label{eq:tau}
 \tau:=\sqrt2-1,
 \qquad \tau^2=1-2\tau,
 \qquad \sigma:=\tau-\tau^2=3\sqrt2-4.
\end{equation}
The non-axial radial boundary of $\OO$ consists of
\begin{align}
 L_1&=\{(x,1):0\le x\le\tau\},\notag\\
 L_2&=\{(x,\sqrt2-x):\tau\le x\le1\},\label{eq:radial-sides}\\
 L_3&=\{(1,y):0\le y\le\tau\}.\notag
\end{align}
Every nonzero homogeneous polynomial attains its supremum norm on
$L_1\cup L_2\cup L_3$.

Let $\Ptwo$ denote the three-dimensional real vector space of quadratic homogeneous
polynomials.  Thus $P\in\Ptwo$ satisfies $P(tx,ty)=t^2P(x,y)$ and has the form
\[
 P(x,y)=ax^2+bxy+cy^2.
\]
We write
\[
 \normO{P}:=\sup_{(x,y)\in\OO}|P(x,y)|,
 \qquad
 \BO:=\{P\in\Ptwo:\normO{P}\le1\}.
\]
The companion paper \cite{HKMSGeometry} gives an exact five-region norm formula,
all vertical sections of $\BO$, a parametrization of the unit sphere, and a complete
classification of the extreme points.  In particular, every continuous convex
functional on $\BO$ can be maximized over four one-parameter curves and four pairs
of isolated polynomials.  The present article turns that geometric information into
sharp inequalities.

Our first main result is the exact pointwise Euclidean Bernstein function
\[
 \Bern(x,y):=\sup_{\normO{P}\le1}\|\nabla P(x,y)\|_2.
\]
After reducing by symmetry to points $(x,rx)$, $0\le r\le1$, we prove that one
extreme curve dominates every other extreme point.  A genuine phase transition
occurs at $r=1/2$: for $r\le1/2$ a fixed endpoint polynomial is extremal, whereas
for $r>1/2$ the maximizing point moves along the curve and is given explicitly by
the trigonometric solution of a cubic equation.  The pointwise formula yields the
sharp Markov inequality
\[
 \max_{(x,y)\in\OO}\|\nabla P(x,y)\|_2
 \le2\sqrt5\,\normO{P}.
\]
We also prove that the exact relative quadratic polarization constant is $2$.

The coefficient conclusions form a second group of exact results.  For every
$1\le q<\infty$,
\[
 \sup_{P\ne0}\frac{(|a|^q+|b|^q+|c|^q)^{1/q}}{\normO{P}}
 =(2+4^q)^{1/q},
\]
while the corresponding $\ell_\infty$ constant is $4$.  At $q=4/3$ this gives a
sharp fixed-space coefficient inequality of Bohnenblust--Hille type, with factor
$(2+2^{8/3})^{3/4}$.  We further determine the unconditional constant of the
canonical monomial basis $(x^2,xy,y^2)$ and obtain the exact value $3$.  The same
polynomial
\begin{equation}\label{eq:Pstar-intro}
 P_*(x,y):=-x^2+4xy-y^2
\end{equation}
attains the Markov, polarization, coefficient, and unconditional constants.  In
particular,
\[
 \Mark=2\sqrt{1+\mathbf c_2(\OO)^2}=2\sqrt5.
\]

The paper is organized as follows.  In \cref{sec:geometry} we give the necessary
convex preliminaries and recall the geometric input.  The exact Bernstein function
is obtained in \cref{sec:bernstein}, and the global Markov constant is determined in
\cref{sec:markov}.  The relative polarization problem is solved in
\cref{sec:polarization}.  Finally, \cref{sec:coefficients} treats the complete scale
of coefficient norms, distinguishes the resulting $4/3$ estimate from the classical
Bohnenblust--Hille inequalities, and determines the canonical unconditional
constant and its associated body-relative Bohr radius.

\begin{figure}[t]
\centering
\begin{tikzpicture}[scale=3.35]
  \pgfmathsetmacro{\tt}{sqrt(2)-1}
  \draw[->,line width=.75pt] (-0.12,0)--(1.17,0) node[right] {$x$};
  \draw[->,line width=.75pt] (0,-0.12)--(0,1.17) node[above] {$y$};
  \draw[gray!55,dashed,thick]
    (1,\tt)--(\tt,1)--(-\tt,1)--(-1,\tt)--(-1,-\tt)--(-\tt,-1)--
    (\tt,-1)--(1,-\tt)--cycle;
  \fill[gray!28] (0,0)--(1,0)--(1,\tt)--(\tt,1)--(0,1)--cycle;
  \draw[line width=1.1pt] (0,0)--(1,0)--(1,\tt)--(\tt,1)--(0,1)--cycle;
  \draw[line width=1.3pt] (0,1)--(\tt,1)--(1,\tt)--(1,0);
  \foreach \p in {(0,0),(1,0),(1,\tt),(\tt,1),(0,1)}
    \fill \p circle (1.05pt);
  \node at (.46,.48) {$\OO$};
  \node[above,font=\scriptsize] at (.20,1) {$L_1$};
  \node[above right,font=\scriptsize] at (.70,.72) {$L_2$};
  \node[right,font=\scriptsize] at (1,.18) {$L_3$};
  \node[above right,font=\scriptsize] at (\tt,1) {$(\tau,1)$};
  \node[right,font=\scriptsize] at (1,\tt) {$(1,\tau)$};
\end{tikzpicture}
\caption{The octagonal sector \(\OO\).  The dashed polygon is the regular octagon
\(\widetilde{\OO}\), and the bold broken line \(L_1\cup L_2\cup L_3\) is the
non-axial radial boundary.}
\label{fig:domain}
\end{figure}

\section{Convex preliminaries and the geometric input}\label{sec:geometry}

A point $e$ of a convex set $C$ is called \emph{extreme} if
$e=\theta x+(1-\theta)y$, with $x,y\in C$ and $0<\theta<1$, implies
$x=y=e$.  We denote the set of all extreme points by $\ext(C)$.  The
finite-dimensional Krein--Milman theorem states that a compact convex set is the
convex hull of its extreme points \cite{KreinMilman}.  The following consequence is
the form used throughout the paper.

\begin{proposition}[Krein--Milman reduction]\label{prop:KM}
Let $C$ be a compact convex subset of a finite-dimensional real vector space, and let
$\Phi:C\to\R$ be continuous and convex.  Then
\[
 \max_{x\in C}\Phi(x)=\max_{e\in\ext(C)}\Phi(e).
\]
\end{proposition}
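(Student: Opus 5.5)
The plan is to prove the two inequalities separately and combine them. The inequality $\max_{e\in\ext(C)}\Phi(e)\le\max_{x\in C}\Phi(x)$ is immediate because $\ext(C)\subseteq C$. For this to make sense, both maxima have to exist. On $C$ this follows from compactness and continuity of $\Phi$. On $\ext(C)$ there is a subtlety: the extreme set need not be closed. So I will first prove that the supremum over $\ext(C)$ equals $\max_C\Phi$ and is attained at an extreme point, which gives the maximum at the same time.

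For the nontrivial direction, let $x_0\in C$ be a point with $\Phi(x_0)=\max_C\Phi$; it exists by compactness. By the finite-dimensional Krein--Milman theorem (Minkowski's theorem), $C=\operatorname{conv}(\ext C)$. Hence $x_0=\sum_{i=1}^k\theta_i e_i$ with $e_i\in\ext(C)$, $\theta_i>0$ and $\sum\theta_i=1$. The sum is a finite convex combination, since the convex hull consists of finite convex combinations; Carathéodory even allows $k\le d+1$.

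Next I apply convexity of $\Phi$ in the finite form of Jensen's inequality, which follows by induction on $k$ from the two-point definition:
\[
\Phi(x_0)\le\sum_{i=1}^k\theta_i\Phi(e_i)\le\max_{1\le i\le k}\Phi(e_i).
\]
So some extreme point $e_j$ satisfies $\Phi(e_j)\ge\Phi(x_0)=\max_C\Phi$. Since $e_j\in C$, equality holds. The maximum over $\ext(C)$ is therefore attained, at $e_j$, and it equals $\max_C\Phi$.

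The only point needing care is the attainment issue just described. Because $\ext(C)$ may fail to be closed, one cannot argue by compactness of $\ext(C)$. The argument above avoids this by exhibiting an extreme maximizer directly, so nothing beyond the finite-dimensional Krein--Milman theorem cited in the paper and elementary Jensen is needed.
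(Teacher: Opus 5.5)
Your proof is correct and follows the paper's argument: pick a maximizer $x_0$, write it as a finite convex combination of extreme points via the finite-dimensional Krein--Milman theorem, and use finite Jensen to find an extreme point $e_j$ with $\Phi(e_j)\ge\Phi(x_0)$. Your remark that this also shows the maximum over $\ext(C)$ is attained, even though $\ext(C)$ need not be closed, states explicitly a point the paper leaves implicit.
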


\begin{proof}
Choose $x_0\in C$ with $\Phi(x_0)=\max_C\Phi$.  By the finite-dimensional
Krein--Milman theorem, $x_0$ is a convex combination of extreme points.  Convexity
then shows that at least one of those extreme points has $\Phi$-value equal to
$\Phi(x_0)$.
\end{proof}

We now recall the part of the unit-ball geometry from \cite{HKMSGeometry} that will
be used below.  The formulas are rewritten in parameters adapted to the derivative
problem.

For \(0\le s\le\tau\) and \(\tau\le u\le1\), put
\begin{align}
 \gamma_1(s)&:=(-1,2s,1-s^2),
 &\gamma_4(s)&:=(1-s^2,2s,-1),\label{eq:gamma14}\\
 \gamma_2(u)&:=\left(-1,1+3u,\frac{1-3u^2}{2}\right),
 &\gamma_3(u)&:=\left(\frac{1-3u^2}{2},1+3u,-1\right).
 \label{eq:gamma23}
\end{align}
The coordinates are ordered as \((a,b,c)\).  These parametrizations are equivalent
to those in the companion paper: for example, on \(\Gamma_2\) one has
\(t=(1-3u^2)/2\) and \(1+\sqrt{3(1-2t)}=1+3u\).

Define also
\begin{equation}\label{eq:isolated}
 p_0=(0,0,1),\quad p_1=(1,0,0),\quad
 p_2=\left(\frac12,1,\frac12\right),\quad p_3=(1,-\tau,1).
\end{equation}

\begin{theorem}[Geometric input; \cite{HKMSGeometry}]\label{thm:geometric-input}
The extreme points of \(\BO\) are
\begin{equation}\label{eq:extreme-set}
 \ext(\BO)=
 \bigcup_{j=1}^4\bigl(\Gamma_j\cup(-\Gamma_j)\bigr)
 \cup\{\pm p_0,\pm p_1,\pm p_2,\pm p_3\},
\end{equation}
where \(\Gamma_j\) is the image of \(\gamma_j\) in
\eqref{eq:gamma14}--\eqref{eq:gamma23}.  Every listed polynomial has norm one.
\end{theorem}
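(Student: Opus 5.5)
The plan is to verify the statement directly from the definition of $\normO{\cdot}$, in three stages. First, every listed polynomial has norm one. Second, each is extreme. Third, no other norm-one polynomial is extreme. Everything rests on the reduction already noted in the introduction: by homogeneity, $\normO{P}$ is the maximum of $|P|$ over $L_1\cup L_2\cup L_3$. On $L_1$ and $L_3$, $P$ restricts to the one-variable quadratics $x\mapsto ax^2+bx+c$ on $[0,\tau]$ and $y\mapsto a+by+cy^2$ on $[0,\tau]$. On $L_2$ it restricts to the quadratic $x\mapsto P(x,\sqrt2-x)$ on $[\tau,1]$. Hence $\normO{P}$ is the largest of $|a|$, $|c|$, $|P(1,\tau)|$, $|P(\tau,1)|$ and the absolute values of at most three interior critical values. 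This is a finite, explicit expression, and it is the source of the five-region formula.

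\emph{Norm one.} For each of $\pm p_0,\dots,\pm p_3$ and each curve $\gamma_j$, I would evaluate these candidates. For example, $\gamma_1(s)$ restricts on $L_1$ to $1-(x-s)^2$, which peaks at value $1$ at $x=s$. Its value at the vertex $(1,0)$ is $a=-1$. On $L_2$ and $L_3$, the bound $|P|\le1$ reduces to sign checks on quadratics in $s\in[0,\tau]$, and the identity $\tau^2=1-2\tau$ simplifies these. The maps $\gamma_4$ and $\gamma_3$ follow from $\gamma_1$ and $\gamma_2$ by the symmetry $(x,y)\mapsto(y,x)$, which swaps $a$ and $c$ and preserves $\OO$. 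So only $\gamma_1$, $\gamma_2$ and the isolated points need separate treatment.

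\emph{Extremality.} Suppose $P=\frac12(Q+R)$ with $\normO{Q},\normO{R}\le1$. At every contact point $z$ with $|P(z)|=1$ we get $Q(z)=R(z)=P(z)$. At an interior contact point of an edge, $Q-P$ restricted to that edge has a zero that is an extremum, hence a double zero; this gives a derivative condition as well. So $P$ is extreme as soon as these value and derivative functionals span the three-dimensional dual of $\Ptwo$. For $\gamma_1(s)$ the three conditions are: the value at $(1,0)$, and the value and tangential derivative at $(s,1)$. These are independent, so $\gamma_1(s)$ is extreme. For $\gamma_2(u)$ the analogous tangency lies on $L_2$. For $p_0,\dots,p_3$, I expect at least three independent contact points, for instance $p_2$ touching $1$ along part of the boundary.

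\emph{Completeness.} This is the main obstacle. Take any $P$ with $\normO{P}=1$ not on the list. I would show its contact conditions (values at contact points, plus derivatives at interior tangencies) have rank at most two. Then there is a nonzero $H\in\Ptwo$ annihilated by all of them. A compactness argument, using strict inequality $|P|<1$ away from the contact set and second-order control near tangencies, shows $P\pm\varepsilon H\in\BO$ for small $\varepsilon>0$. So $P$ is not extreme. The difficulty is organizing the contact configurations. I would split according to which of the seven candidate values attain $1$ in absolute value, following the five regions of the norm formula. In each region the rank-three configurations reduce to a polynomial system in $(a,b,c)$. Solving these systems should produce exactly the parametrizations \eqref{eq:gamma14}--\eqref{eq:gamma23} and the isolated points \eqref{eq:isolated}. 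The parameter ranges $0\le s\le\tau$ and $\tau\le u\le1$ are exactly the conditions that the tangency lies inside the relevant edge. The delicate part is excluding spurious solutions whose norm exceeds one at some other boundary point.
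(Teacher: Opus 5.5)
The paper gives no proof of this theorem. It imports it from the companion paper, where it is obtained along the lines of the Aron--Klimek program: an explicit five-region norm formula, vertical sections of $\BO$, and a parametrization of the unit sphere. Your active-constraint approach is a genuinely different route, but the criterion it rests on is wrong as stated. You record derivative conditions only at \emph{interior} tangencies, whereas several endpoints of the listed curves are tangent at a \emph{vertex}. For $\gamma_1(0)=y^2-x^2$ the contact set is $\{(1,0),(0,1)\}$, so your functionals are $P\mapsto a$ and $P\mapsto c$, which have rank two. Likewise $\gamma_1(\tau)$ and $\gamma_2(\tau)$ touch only at $(1,0)$ and $(\tau,1)$. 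Your extremality step therefore misses these listed points. Worse, the lemma behind your completeness step (``rank at most two implies $P\pm\varepsilon H\in\BO$'') is false for $y^2-x^2$. The annihilator of $a,c$ is $H=xy$, but on $L_1$ one gets $1-x^2+\varepsilon x>1$ for $0<x<\varepsilon$. The same failure occurs for the non-extreme $ax^2+y^2$, $-1<a<0$, where an arbitrary $H$ vanishing at $(0,1)$ need not work.

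The fix is to include, at each vertex contact, the one-sided derivative along every adjacent edge on which that derivative of $P$ vanishes. If $P=\frac12(Q+R)$, the corresponding one-sided derivatives of $Q$ and $R$ have the same sign and average to zero, so both vanish. Conversely, every edge restriction is quadratic. Hence a vanishing first-order term at a contact forces a nonzero second-order term of the correct sign, unless $P$ is constant on that edge, which happens only for $\pm p_0,\pm p_1,\pm p_2$. So the perturbation $P\pm\varepsilon H$ goes through. With this correction, ``extreme iff the active functionals span the dual of $\Ptwo$'' is exact. By Euler's identity $z\cdot\nabla P(z)=2P(z)$, each vertex contributes at most two such functionals.

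Second, the classification itself, which is the real content of the theorem, is only asserted (``should produce exactly''). Your heuristic that the ranges $0\le s\le\tau$ and $\tau\le u\le1$ are exactly the conditions for the tangency to lie inside the relevant edge is also wrong for $\Gamma_2$ and $\Gamma_3$. The tangency point of $\gamma_2(u)$ is $\frac{\sqrt2}{1+u}(u,1)$, which moves only from the vertex $(\tau,1)$ to the midpoint of $L_2$. The cutoff $u=1$ is where $c=(1-3u^2)/2$ reaches $-1$, creating a new contact at $(0,1)$; for $u>1$ the norm exceeds one. So the ends of the families are fixed by feasibility against other boundary points, which is precisely the part you defer.

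To complete the proof, you must enumerate the rank-three configurations of the corrected active set. Use that there is at most one interior contact per edge and at most two functionals per vertex. Then solve each resulting system and verify $|P|\le1$ on the rest of $L_1\cup L_2\cup L_3$.
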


The endpoint \(u=1\) of \(\Gamma_2\) will play a distinguished role below:
\begin{equation}\label{eq:Pstar-geometric}
 \gamma_2(1)=(-1,4,-1),
\end{equation}
which is the coefficient triple of the polynomial \(P_*=-x^2+4xy-y^2\).

Consequently, \cref{prop:KM} gives
\begin{equation}\label{eq:KM-reduction}
 \max_{P\in\BO}\Phi(P)=\max_{P\in\ext(\BO)}\Phi(P)
\end{equation}
for every continuous convex functional $\Phi:\BO\to\R$.  This is the only
geometric result from the companion paper required in the proofs of the sharp
inequalities.

\begin{figure}%[t]
\centering
\includegraphics[width=.96\textwidth]{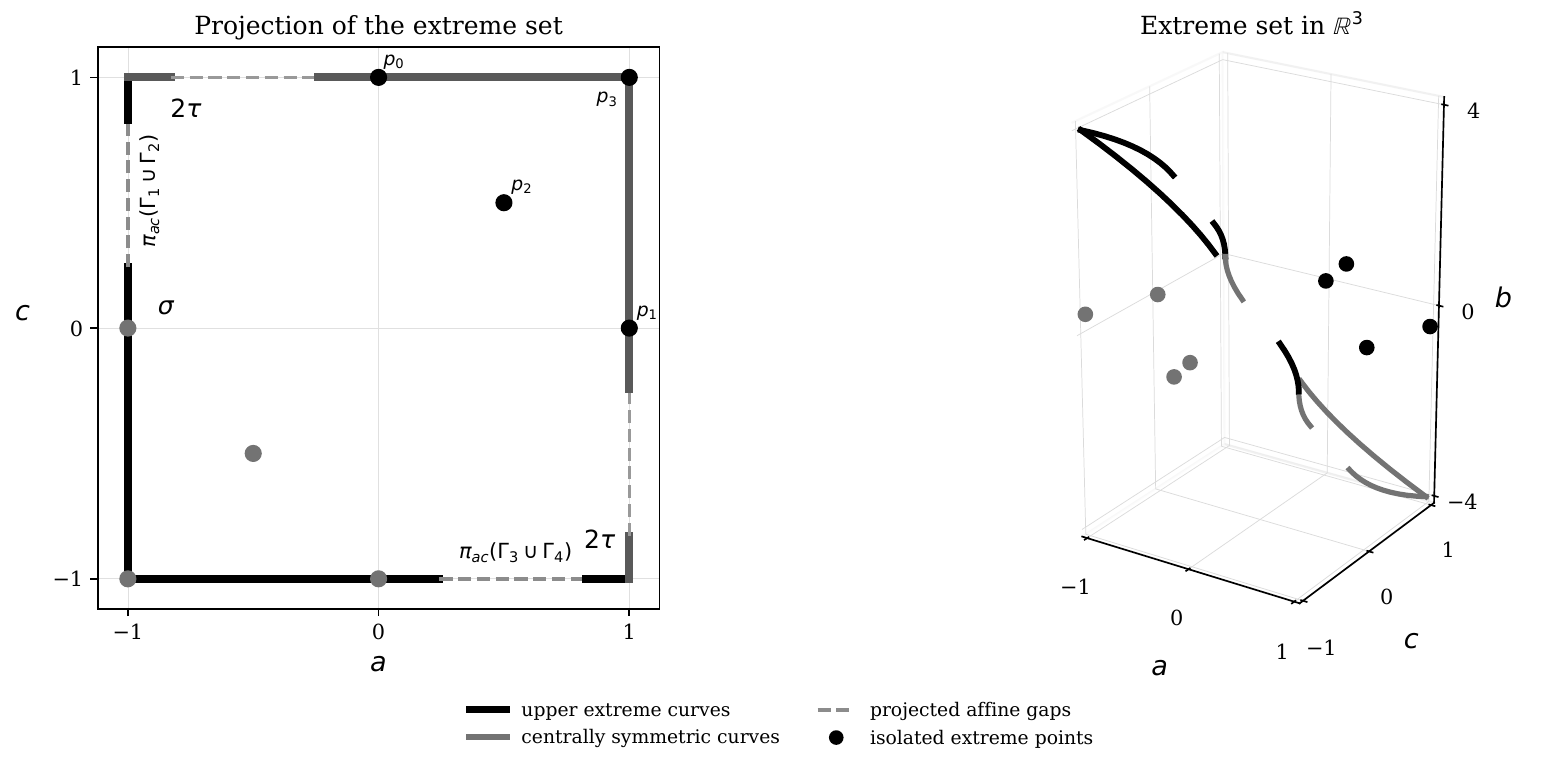}
\caption{The geometric input from \cref{thm:geometric-input}.  The left panel shows
the projection of the extreme set onto the coefficient plane; the right panel shows
the four upper curves and isolated points together with their centrally symmetric
counterparts.}
\label{fig:extreme-geometry}
\end{figure}

\section{The exact Euclidean Bernstein function}\label{sec:bernstein}

\subsection*{Historical background}

The problem of estimating derivatives of a polynomial from its supremum norm has a
remarkably concrete origin. In the late nineteenth century, D.~I. Mendeleev asked
how large the derivative of a quadratic polynomial can be when the polynomial is
bounded on a real interval. After an affine change of variable, the interval may be
normalized to $[-1,1]$. If
\[
\|P\|_{[-1,1]}:=\max_{-1\le t\le1}|P(t)|,
\]
Mendeleev's quadratic estimate asserts that
\[
\|P'\|_{[-1,1]}\le4\|P\|_{[-1,1]}.
\]
The constant $4$ is optimal: for $P(t)=1-2t^2$, equality occurs at $t=\pm1$.
This question, which arose from Mendeleev's work on the specific gravity of
alcoholic solutions, led to the classical investigations of the Markov brothers;
see \cites{Markov1889,FSMS} and the historical accounts in
\cites{BorweinErdelyi,KalmykovNagyTotik}.

A.~A. Markov extended the quadratic result to arbitrary degree in 1889.

\begin{theorem}[A.~A. Markov, 1889]\label{thm:classical-AAMarkov}
	Let $P$ be a real polynomial of degree at most $n$. Then
	\begin{equation}\label{eq:classical-AAMarkov}
		\|P'\|_{[-1,1]}\le n^2\|P\|_{[-1,1]}.
	\end{equation}
	The constant $n^2$ is optimal. Equality is attained at the endpoints by the
	$n$th Chebyshev polynomial of the first kind,
	\[
	T_n(t)=\cos\bigl(n\arccos t\bigr),\qquad -1\le t\le1.
	\]
\end{theorem}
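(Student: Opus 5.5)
The plan follows the classical route via Bernstein's pointwise inequality and Chebyshev extremality. Take a real polynomial $P$ of degree at most $n$ with $\|P\|_{[-1,1]}=1$. The first step is to establish the pointwise Bernstein bound
\[
|P'(t)|\le \frac{n}{\sqrt{1-t^2}},\qquad -1<t<1.
\]
To do this, substitute $t=\cos\theta$, so that $T(\theta)=P(\cos\theta)$ is a real cosine polynomial of degree $n$ with $\|T\|_\infty\le1$. Bernstein's inequality for trigonometric polynomials, $\|T'\|_\infty\le n\|T\|_\infty$, then gives the bound, since $T'(\theta)=-\sin\theta\,P'(\cos\theta)$. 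For the trigonometric inequality we would argue by contradiction. Suppose $|T'(\theta_0)|=M>n$. Compare $T$ with $\cos(n(\theta-\alpha))$, suitably shifted and scaled by $\lambda<1$ close to $1$. Interlacing at the $2n$ alternation points of the comparison function then forces $T-\lambda\cos(n(\theta-\alpha))$ to have more than $2n$ zeros counted with multiplicity. This is impossible for a nonzero trigonometric polynomial of degree $n$.

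The second step converts the interior bound into an estimate valid up to the endpoints. We use Lagrange interpolation at the Chebyshev nodes $t_k=\cos\frac{(2k-1)\pi}{2n}$, $k=1,\dots,n$, for $P'$, which has degree $n-1$. On $|t|\le\cos\frac{\pi}{2n}$ the Bernstein bound already gives
\[
|P'(t)|\le \frac{n}{\sin(\pi/2n)}\le n^2,
\]
using $\sin x\ge 2x/\pi$ on $[0,\pi/2]$. Outside that range, i.e.\ for $t\ge t_1$ and symmetrically, write
\[
P'(t)=\sum_k P'(t_k)\,\ell_k(t).
\]
Observe that all $\ell_k(t)$ have the same sign there, and that
\[
|P'(t_k)|\le \frac{n}{\sqrt{1-t_k^2}}=\frac{n}{\sin\frac{(2k-1)\pi}{2n}}.
\]
Then
\[
|P'(t)|\le \sum_k \frac{n\,|\ell_k(t)|}{\sqrt{1-t_k^2}}=|T_n'(t)|,
\]
because $T_n'$ is interpolated by exactly these values with signs $(-1)^{k-1}$, matching the sign pattern of $\ell_k(t)$. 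Finally, $|T_n'|$ is increasing on $[t_1,1]$ and $T_n'(1)=n^2$. This second step, the sign bookkeeping of the Lagrange basis outside the node interval, is where we expect the main care to be needed.

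Sharpness is immediate. Differentiating $T_n(\cos\theta)=\cos n\theta$ gives
\[
T_n'(\cos\theta)=\frac{n\sin n\theta}{\sin\theta}\to n^2\quad\text{as }\theta\to0,
\]
so $\|T_n'\|=n^2=n^2\|T_n\|$, with equality at $t=\pm1$. We expect the main obstacle to be the zero-counting argument in the trigonometric Bernstein inequality. There, double zeros at touching points must be handled by the perturbation $\lambda<1$ and a limiting argument.
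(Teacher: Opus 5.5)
The paper does not prove this statement. It is quoted in the historical part of Section~\ref{sec:bernstein} as a classical result with a citation to Markov's 1889 paper, and none of the paper's arguments depend on it, so there is no proof to compare with. Judged on its own, your plan is the standard and correct derivation of Markov's inequality from Bernstein's pointwise bound (Theorem~\ref{thm:classical-Bernstein}, which the paper also only cites). The mechanism is Lagrange interpolation of $P'$ at the zeros $t_k$ of $T_n$.
\begin{itemize}
\item On $|t|\le t_1$, Bernstein's bound and $n/\sin(\pi/(2n))\le n^2$ suffice.
\item For $t\ge t_1$, exactness of interpolation for $T_n'$ and $|T_n'(t_k)|=n/\sqrt{1-t_k^2}$ give $|P'(t)|\le T_n'(t)$.
\item Finally $T_n'(t)\le T_n'(1)=n^2$ there, since $T_n'$ has positive leading coefficient and all its zeros $\cos(j\pi/n)$ lie below $t_1$.
\end{itemize}
Sharpness via $T_n$ is also correct.

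Two slips need fixing.

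First, it is false that all $\ell_k(t)$ have the same sign for $t>t_1$. The denominator $\prod_{j\ne k}(t_k-t_j)$ has exactly $k-1$ negative factors, so the sign of $\ell_k(t)$ is $(-1)^{k-1}$. What is true is that every product $T_n'(t_k)\ell_k(t)$ is nonnegative, because $T_n'(t_k)=(-1)^{k-1}n/\sqrt{1-t_k^2}$. Your closing sentence about matching sign patterns is consistent with this, so only the wording is wrong.

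Second, in the trigonometric Bernstein step the factor $\lambda<1$ must multiply $T$, not the cosine. With $\|T\|\le1$, the function $T-\lambda\cos(n(\theta-\alpha))$ need not alternate in sign at the extrema of the cosine. The working version goes as follows.
\begin{itemize}
\item Take $\lambda<1$ with $\lambda M>n$.
\item Choose $\alpha$ so that $\cos(n(\theta_0-\alpha))=\lambda T(\theta_0)$, with $\theta_0$ on a monotone arc whose slope has the same sign as $T'(\theta_0)$.
\item Set $D=\cos(n(\cdot-\alpha))-\lambda T$. Then $D$ is nonzero at every extremum of the cosine and alternates in sign there, so it has a zero inside each of the $2n$ arcs.
\item Since $|\lambda T'(\theta_0)|>n\ge n|\sin(n(\theta_0-\alpha))|$, $D$ crosses zero at $\theta_0$ against the monotone direction of its arc. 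That arc therefore contains three distinct zeros.
\item This gives at least $2n+2$ zeros per period. Because $\|\lambda T\|<1$, $D\not\equiv0$, which is a contradiction.
\end{itemize}
This yields $M\le n$ directly, without any limiting argument in $\lambda$.
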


V.~A. Markov subsequently obtained the sharp analogue for every higher derivative;
we cite the German translation of the 1892 Russian original in \cite{VMarkov}.

\begin{theorem}[V.~A. Markov, 1892]\label{thm:classical-VAMarkov}
	Let $P$ be a real polynomial of degree at most $n$, and let $1\le k\le n$.
	Then
	\begin{equation}\label{eq:classical-VAMarkov}
		\|P^{(k)}\|_{[-1,1]}
		\le T_n^{(k)}(1)\,\|P\|_{[-1,1]}
		=\frac{n^2(n^2-1)\cdots\bigl(n^2-(k-1)^2\bigr)}
		{1\cdot3\cdots(2k-1)}\,\|P\|_{[-1,1]}.
	\end{equation}
	The constant is optimal, and equality is attained by $\pm T_n$ at the endpoints.
\end{theorem}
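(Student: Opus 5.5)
The plan is to use the classical Chebyshev--Lagrange interpolation argument. The statement is V.~A. Markov's theorem, which is classical and quoted from \cite{VMarkov}; I sketch the standard route, which reduces to A.~A. Markov's theorem when $k=1$.

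First I normalize $\|P\|_{[-1,1]}=1$ and interpolate $P$ at the $n+1$ Chebyshev extremal nodes $\eta_j=\cos(j\pi/n)$, $j=0,\dots,n$. This gives
\[
 P(t)=\sum_{j=0}^n P(\eta_j)\ell_j(t),
 \qquad |P(\eta_j)|\le1 .
\]
For $t$ outside $(-1,1)$, specifically at $t=1$, the signs of $\ell_j^{(k)}(1)$ alternate exactly as the signs of $T_n(\eta_j)=(-1)^j$. This follows from Rolle-type counting of the zeros of the derivatives of the node polynomial $(t^2-1)U_{n-1}(t)$, all of which lie in $[-1,1]$. Hence
\[
 |P^{(k)}(1)|\le\sum_j|\ell_j^{(k)}(1)|=T_n^{(k)}(1).
\]
By symmetry the same bound holds at $t=-1$.

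The hard step is the interior bound $|P^{(k)}(t)|\le T_n^{(k)}(1)$ for $|t|<1$. This is V.~A. Markov's genuine contribution, and the sign pattern of $\ell_j^{(k)}(t)$ fails there. For $k=1$ I would use Bernstein's inequality
\[
 |P'(t)|\le n(1-t^2)^{-1/2}
\]
on the middle range, combined with Schur's inequality near the endpoints, to get $n^2$. For general $k$ I would follow the Duffin--Schaeffer approach. I would show that for each fixed $t$ the extremal value of $|P^{(k)}(t)|$ is controlled by
\[
 \max\bigl(|T_n^{(k)}(t)|,\,|(\text{a suitable combination of } T_n \text{ and } \sqrt{1-t^2}\,U_{n-1})^{(k)}|\bigr).
\]
I would then check that this quantity is at most $T_n^{(k)}(1)$, using the fact that $T_n^{(k)}$ is increasing on $[1,\infty)$ and that its zeros interlace.

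Finally, I evaluate $T_n^{(k)}(1)$ from the Chebyshev differential equation
\[
 (1-t^2)y''-ty'+n^2y=0 .
\]
Differentiating it $k$ times and setting $t=1$ gives the recursion
\[
 (2k+1)T_n^{(k+1)}(1)=(n^2-k^2)T_n^{(k)}(1),
\]
which produces the stated product formula. Equality for $\pm T_n$ at $t=\pm1$ is then immediate.
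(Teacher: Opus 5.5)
The paper gives no proof of this theorem. It quotes V.~A. Markov's result as classical background, so your sketch has to stand on its own.

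\textbf{What works.} Let $\omega(t)=(t^2-1)U_{n-1}(t)$. The polynomials $\omega(t)/(t-\eta_j)$ have real zeros in $[-1,1]$, with at most a simple zero at $1$. Hence their derivatives of order $k\ge1$ are positive at $t=1$, and $\ell_j^{(k)}(1)$ has sign $(-1)^j$. This gives $|P^{(k)}(t)|\le T_n^{(k)}(t)$ for $t\ge1$. Differentiating the Chebyshev equation $k$ times gives $(1-t^2)y^{(k+2)}-(2k+1)ty^{(k+1)}+(n^2-k^2)y^{(k)}=0$, which yields your recursion and the product formula. Bernstein plus Schur is a standard proof of the case $k=1$.

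\textbf{The gap.} It is the interior estimate for $k\ge2$, which is the whole content of the theorem, and your plan for it cannot work as stated. Consider a combination $\cos\alpha\,T_n+\sin\alpha\,S_n$ with $S_n=\sqrt{1-t^2}\,U_{n-1}$.
\begin{itemize}
\item If the $S_n$-part is nonzero, its $k$-th derivative is unbounded near $\pm1$. Since $U_{n-1}(1)=n\neq0$, $S_n^{(k)}(t)$ behaves like a nonzero multiple of $(1-t)^{1/2-k}$; already $S_n'(t)=-nT_n(t)/\sqrt{1-t^2}$.
\item Optimizing over $\alpha$ gives exactly the Duffin--Schaeffer majorant $\bigl([T_n^{(k)}]^2+[S_n^{(k)}]^2\bigr)^{1/2}$ of \cref{thm:classical-DuffinSchaeffer}. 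The paper itself notes that this beats $T_n^{(k)}(1)$ only on part of the interval, and the endpoint bound remains decisive near $\pm1$. So your check ``this quantity is at most $T_n^{(k)}(1)$'' fails there.
\item If the $S_n$-part is zero, your majorant reduces to $|T_n^{(k)}(t)|$. This vanishes at interior points, so it cannot dominate $|P^{(k)}(t)|$.
\end{itemize}
Monotonicity of $T_n^{(k)}$ on $[1,\infty)$ and interlacing of zeros say nothing about the singular $S_n$-term.

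\textbf{A partial repair you already have.} Your sign argument does not fail on all of $(-1,1)$. It survives on $[t_k^*,1]$, where $t_k^*<1$ is the largest real zero of the $\ell_j^{(k)}$, and gives $|P^{(k)}|\le T_n^{(k)}\le T_n^{(k)}(1)$ there. But you would then still have to prove that a middle-range majorant stays below $T_n^{(k)}(1)$ all the way up to $t_k^*$. That quantitative matching is the real work, and the plan does not address it.

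\textbf{Ways to supply the missing step.}
\begin{itemize}
\item Duffin and Schaeffer's complex refinement: $|P^{(k)}(x+iy)|\le|T_n^{(k)}(1+iy)|$ for $-1\le x\le1$ and $y\in\R$. Its case $y=0$ is the theorem.
\item A singularity-free pointwise majorant such as $\max\{|T_n^{(k)}(t)|,\ |\tfrac1k(t^2-1)T_n^{(k+1)}(t)+tT_n^{(k)}(t)|\}$. It is built from polynomials, equals $T_n^{(k)}(1)$ at $t=\pm1$, and one then shows it stays below that value on $[-1,1]$.
\end{itemize}
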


The Markov inequalities are global: their constants control the derivative
simultaneously at every point of the interval. In the interior one can ask the finer
pointwise question. For $t\in[-1,1]$, define
\begin{equation}\label{eq:classical-Bnk-def}
	\mathcal B_{n,k}(t)
	:=\sup\bigl\{|P^{(k)}(t)|:\deg P\le n,
	\ \|P\|_{[-1,1]}\le1\bigr\},
\end{equation}
and write $\mathcal B_n:=\mathcal B_{n,1}$. Thus $\mathcal B_{n,k}(t)$ is the
smallest constant for which
\[
|P^{(k)}(t)|\le \mathcal B_{n,k}(t)\|P\|_{[-1,1]}
\]
holds for every polynomial of degree at most $n$. Bernstein's 1912 theorem gives
the fundamental pointwise estimate \cite{Bernstein1912}.

\begin{theorem}[S.~Bernstein]\label{thm:classical-Bernstein}
	For every real polynomial $P$ of degree at most $n$ and every $t\in(-1,1)$,
	\begin{equation}\label{eq:classical-Bernstein}
		|P'(t)|\le \frac{n}{\sqrt{1-t^2}}\,\|P\|_{[-1,1]}.
	\end{equation}
	Equivalently,
	\[
	\mathcal B_n(t)\le \frac{n}{\sqrt{1-t^2}}.
	\]
\end{theorem}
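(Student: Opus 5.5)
The classical route substitutes $t=\cos\theta$, so the estimate becomes a statement about trigonometric polynomials, and then applies Bernstein's inequality for those. Given $P$ of degree at most $n$ with $\|P\|_{[-1,1]}\le1$, set $T(\theta):=P(\cos\theta)$. This is a real trigonometric polynomial of degree at most $n$, and $\|T\|_\infty=\|P\|_{[-1,1]}$. The chain rule gives $T'(\theta)=-\sin\theta\,P'(\cos\theta)$. For $t=\cos\theta\in(-1,1)$ we have $|\sin\theta|=\sqrt{1-t^2}>0$. Hence it suffices to prove the trigonometric Bernstein inequality
\[
\|T'\|_\infty\le n\|T\|_\infty ,
\]
because then $|P'(t)|\le n/\sqrt{1-t^2}$.

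To prove the trigonometric inequality I would argue by contradiction, using a zero-counting argument. Normalize $\|T\|_\infty<1$ and suppose $|T'(\theta_0)|>n$ at some point $\theta_0$. Replacing $T$ by $\pm T(\cdot+c)$, we may assume $T'(\theta_0)=n\lambda$ with $\lambda>1$ and $\sin(n\theta_0)=T(\theta_0)/\lambda$ for a suitably chosen phase. The first thing I would try is to compare $T$ with $S(\theta)=\lambda^{-1}\cdots$. More concretely, the standard comparison function is $S(\theta):=\sin(n(\theta-\alpha))$ with $\alpha$ chosen so that $S(\theta_0)=T(\theta_0)$ and $S'(\theta_0)>0$.

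Next I would count zeros of the difference $D:=S-T$. Since $\|T\|_\infty<1$ and $S$ alternates between $\pm1$ at $2n$ equally spaced points, $D$ has at least $2n$ sign changes on a period, each located between consecutive extrema of $S$. At $\theta_0$, which lies on the monotone arc of $S$ containing it, we have $D(\theta_0)=0$. Because $T'(\theta_0)>n\ge S'(\theta_0)$, we also get $D'(\theta_0)<0$ there. Combined with the signs of $D$ at the neighbouring extrema of $S$, this forces at least two further zeros in that arc, counted with multiplicity. So $D$ would have more than $2n$ zeros per period. This is impossible for a nonzero trigonometric polynomial of degree $n$, and $D\ne0$ because $\|T\|_\infty<1$.

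The main obstacle is making this zero count rigorous near the arc containing $\theta_0$. The delicate case is when $T'(\theta_0)$ must be compared with $S'(\theta_0)=n\cos(n(\theta_0-\alpha))$ rather than with $n$ itself. A cleaner alternative, which I would fall back on, is M.~Riesz's interpolation formula:
\[
T'(\theta)=\frac1{4n}\sum_{k=1}^{2n}(-1)^{k+1}\frac{1}{\sin^2(\theta_k/2)}\,T(\theta+\theta_k),\qquad \theta_k=\frac{(2k-1)\pi}{2n}.
\]
Its weights have absolute sum equal to $n$, which can be checked with $T=\sin(n\cdot)$. This gives $|T'|\le n\|T\|_\infty$ at once, and hence the stated pointwise bound $\mathcal B_n(t)\le n/\sqrt{1-t^2}$.
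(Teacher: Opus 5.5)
\textbf{Verdict.} Your proof is correct, and it takes a genuinely different route from the paper. The paper gives no proof of this statement. It quotes it from Bernstein (1912), and its only argument is the remark that it is the case $k=1$ of the Duffin--Schaeffer inequality, since $[T_n'(t)]^2+[S_n'(t)]^2=n^2/(1-t^2)$.

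\textbf{Comparison of routes.} Your route is the classical self-contained one. The substitution $t=\cos\theta$ reduces the claim to $\|T'\|_\infty\le n\|T\|_\infty$ for real trigonometric polynomials of degree at most $n$, and both of your arguments for that inequality work. You obtain the result by an elementary argument. The paper instead gets it for free as a special case of the deeper pointwise higher-derivative theorem of Duffin and Schaeffer.

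\textbf{The zero count is already complete.} The obstacle you flag in your last paragraph is not there. The bound $S'(\theta_0)\le n<T'(\theta_0)$ is automatic. On the increasing arc $(a,b)$ of $S$ through $\theta_0$ the signs are $D(a)<0$, then $D>0$ just left of $\theta_0$, then $D<0$ just right of it, then $D(b)>0$. So $D$ vanishes in $(a,\theta_0)$, at $\theta_0$, and in $(\theta_0,b)$. Adding one zero in each of the other $2n-1$ arcs gives at least $2n+2$ zeros per period of the nonzero polynomial $D$, which is impossible.

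\textbf{Clean up the normalization.} The setup in your second paragraph is a false start and should be deleted: the parameter $\lambda$, the condition $\sin(n\theta_0)=T(\theta_0)/\lambda$, the unfinished $S=\lambda^{-1}\cdots$, and the shift by $c$. All you need is to rescale so that $\|T\|_\infty<1$ and $|T'(\theta_0)|>n$, then replace $T$ by $-T$ if necessary.

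\textbf{A stronger conclusion.} The contradiction only uses $T'(\theta_0)>S'(\theta_0)=n\sqrt{1-T(\theta_0)^2}$. So your argument actually proves the van der Corput--Schaake bound $T'(\theta)^2+n^2T(\theta)^2\le n^2\|T\|_\infty^2$. In the original variable this reads $|P'(t)|\le n\sqrt{\|P\|_{[-1,1]}^2-P(t)^2}/\sqrt{1-t^2}$, a strengthening that the Riesz formula does not give directly.

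\textbf{On the Riesz fallback.} Testing with $\sin(n\cdot)$ only computes the absolute weight sum once the interpolation identity is granted. That version therefore rests on quoting Riesz's formula rather than proving it.
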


\begin{figure}%[!htbp]
	\centering
	{%
		\setlength{\tabcolsep}{0pt}
		\begin{tabular*}{\textwidth}
			{@{\extracolsep{\fill}}cccc@{}}
			
			\includegraphics[height=3.75cm]{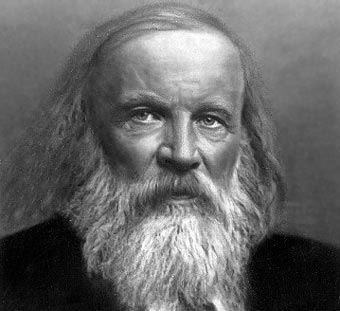}
			&
			\includegraphics[height=3.75cm]{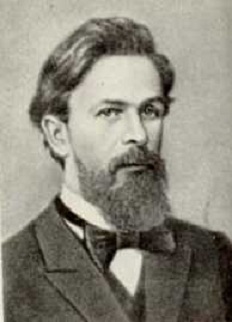}
			&
			\includegraphics[height=3.75cm]{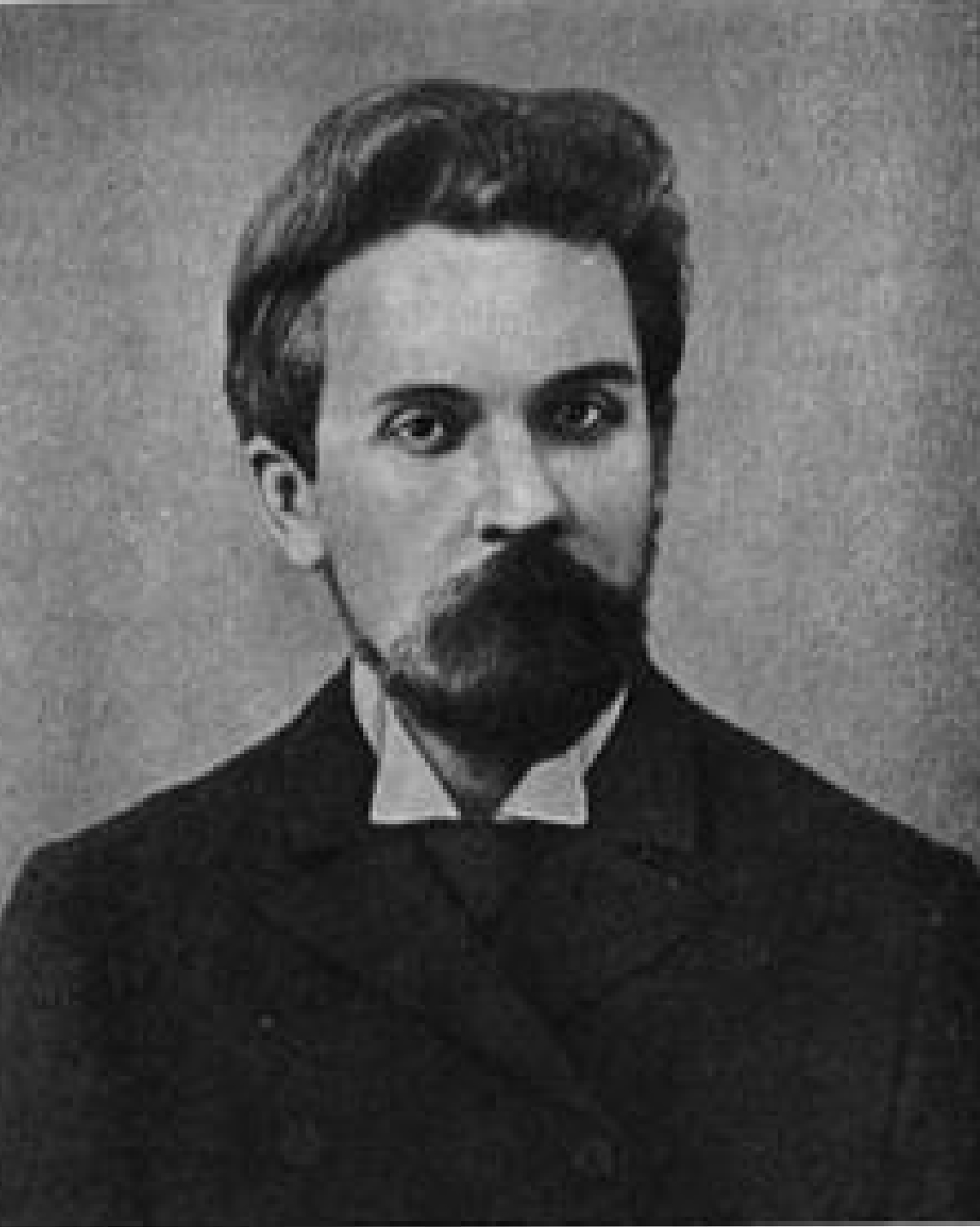}
			&
			\includegraphics[height=3.75cm]{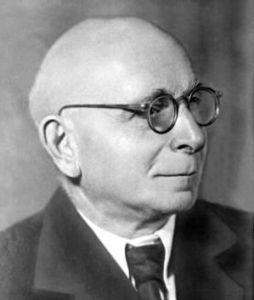}
			\\[4pt]
			
			{\scriptsize D.~I. Mendeleev}
			&
			{\scriptsize A.~A. Markov}
			&
			{\scriptsize V.~A. Markov}
			&
			{\scriptsize S.~Bernstein}
			\\[-1pt]
			
			{\scriptsize (1834--1907)}
			&
			{\scriptsize (1856--1922)}
			&
			{\scriptsize (1871--1897)}
			&
			{\scriptsize (1880--1968)}
			
		\end{tabular*}%
	}
	\caption{Mendeleev and three of the principal figures in the classical
		Bernstein--Markov theory.}
	\label{fig:classical-pioneers}
\end{figure}

Pointwise bounds for higher derivatives were obtained by Duffin and Schaeffer in
1938 \cite{DuffinSchaeffer}. Put
\[
S_n(t):=\sin\bigl(n\arccos t\bigr),\qquad -1\le t\le1.
\]

\begin{theorem}[R.~J. Duffin and A.~C. Schaeffer]
	\label{thm:classical-DuffinSchaeffer}
	Let $P$ be a real polynomial of degree at most $n$, let $1\le k\le n$, and let
	$t\in(-1,1)$. Then
	\begin{equation}\label{eq:classical-DuffinSchaeffer}
		|P^{(k)}(t)|
		\le
		\left(
		\bigl[T_n^{(k)}(t)\bigr]^2+
		\bigl[S_n^{(k)}(t)\bigr]^2
		\right)^{1/2}
		\|P\|_{[-1,1]}.
	\end{equation}
	Consequently,
	\[
	\mathcal B_{n,k}(t)\le \mathcal M_{n,k}(t),
	\qquad
	\mathcal M_{n,k}(t):=
	\left(
	\bigl[T_n^{(k)}(t)\bigr]^2+
	\bigl[S_n^{(k)}(t)\bigr]^2
	\right)^{1/2}.
	\]
\end{theorem}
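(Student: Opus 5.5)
The plan is to reduce the Duffin--Schaeffer estimate to a statement about trigonometric polynomials. I would substitute $t=\cos\theta$ with $\theta\in(0,\pi)$ and set $f(\theta):=P(\cos\theta)$. Then $f$ is an even trigonometric polynomial of degree at most $n$, and $\|f\|_\infty=\|P\|_{[-1,1]}$. Normalize $\|P\|_{[-1,1]}=1$. The comparison functions become $T_n(\cos\theta)=\cos n\theta$ and $S_n(\cos\theta)=\sin n\theta$. The claim is then that, for every real $\alpha$, the polynomial $P^{(k)}(t)$ is dominated by $\cos\alpha\,T_n^{(k)}(t)+\sin\alpha\,S_n^{(k)}(t)$ in the appropriate sense. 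Taking the supremum over $\alpha$ gives exactly the Euclidean expression $\mathcal M_{n,k}(t)$.

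Concretely, fix $t_0\in(-1,1)$ and suppose, for contradiction, that $|P^{(k)}(t_0)|>\mathcal M_{n,k}(t_0)$. Choose $\alpha$ so that $\cos\alpha\,T_n^{(k)}(t_0)+\sin\alpha\,S_n^{(k)}(t_0)=0$. Set $g_\alpha(\theta):=\cos(n\theta-\alpha)$; writing $Q_\alpha$ for the corresponding function of $t$, it is a combination of $T_n$ and $\sqrt{1-t^2}\,U_{n-1}$. Then consider $h=Q_\alpha-\lambda P$ with $|\lambda|<1$, chosen so that $h^{(k)}(t_0)$ has a prescribed nonzero relation.

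The heart of the argument is a zero-counting, interlacing step. Because $|\lambda P|<1$ while $g_\alpha$ alternates between $\pm1$ at $2n$ equally spaced points of the period, $h$ has at least $2n$ sign changes on a period. Consequently it has $2n$ zeros, which is the maximum allowed for a trigonometric polynomial of degree $n$, and all of them are simple. By Rolle's theorem applied repeatedly in the $\theta$ variable, and using the standard extension that derivatives of such functions again have only real, simple zeros (a Laguerre-type or Obreschkoff argument for functions $\mathrm{Re}(e^{-i\alpha}z^n)$ perturbed by polynomials), the $k$th $t$-derivative retains a controlled, maximal number of zeros in $(-1,1)$. One then picks $\lambda$ to create an extra zero of $h^{(k)}$ at $t_0$ of multiplicity two, which is possible precisely when $|P^{(k)}(t_0)|$ exceeds the value of the $\alpha$-combination at $t_0$. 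This extra zero contradicts the maximal zero count.

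The main obstacle is the zero counting for $k\ge2$. Here differentiation in $t$ does not commute nicely with the trigonometric structure, because $\sqrt{1-t^2}\,U_{n-1}$ is not a polynomial. I would first try to handle this with Duffin--Schaeffer's original lemma: if $\omega(z)$ is entire of exponential type with only real zeros and $|p|\le|\omega|$ in a suitable half-plane, then the same domination persists for all derivatives. I would apply it to $\omega(z)=\cos n\theta+i\sin n\theta$, viewed through $t=\cos\theta$ as $(t+i\sqrt{1-t^2})^n$, whose derivatives have real part $T_n^{(k)}$ and imaginary part $S_n^{(k)}$. This gives $|P^{(k)}(t)|\le|\omega^{(k)}(t)|=\mathcal M_{n,k}(t)$ directly.
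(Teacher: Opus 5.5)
The paper does not prove this theorem; it is quoted from Duffin and Schaeffer as background. Your argument therefore has to stand on its own, and it has a genuine gap exactly at the step you call the main obstacle.

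\textbf{The majorant lemma does not apply.} Lemmas of Duffin--Schaeffer/Levin type concern functions that are entire (or polynomial) \emph{in the variable of differentiation}. They need $\omega$ to have no zeros in $\operatorname{Im} z>0$ and $|\omega(\bar z)|\le|\omega(z)|$ there; ``only real zeros'' is not the relevant hypothesis, and $e^{in\theta}$ has no zeros at all. Their engine is that, for $|\lambda|>1$, zero-freeness of $P-\lambda\omega$ in the half-plane survives differentiation (Gauss--Lucas, Laguerre--P\'olya).

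In the $\theta$-variable, $e^{in\theta}$ is entire, but the conclusion concerns $d^k/d\theta^k$ of $P(\cos\theta)$. That is Bernstein's trigonometric inequality $|f^{(k)}(\theta)|\le n^k$. Since $d/dt=-(\sin\theta)^{-1}d/d\theta$, $P^{(k)}(t)$ is a variable-coefficient combination of $\theta$-derivatives of orders $1,\dots,k$. Termwise bounds lose sharpness once $k\ge2$: for $k=2$ they give $(n^2\sin\theta+n|\cos\theta|)/\sin^3\theta$ instead of $\mathcal M_{n,2}(t)=\sqrt{n^4\sin^2\theta+n^2\cos^2\theta}/\sin^3\theta$.

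In the $t$-variable, $(t+i\sqrt{1-t^2})^n$ is the boundary value on $(-1,1)$ of $T_n(z)+\sqrt{z^2-1}\,U_{n-1}(z)$. This function has branch points at $\pm1$ and is not entire, and $P-\lambda\omega$ is not a polynomial. The Bernstein--Walsh bound $|P(z)|\le|\omega(z)|$ off $[-1,1]$ does hold, and one differentiation can be handled. With $z=(w+w^{-1})/2$, $|w|>1$, it reduces to $wG'(w)\ne nG(w)$, where $G(w)=w^nP(z)-\lambda w^{2n}$ has all its zeros in $|w|\le1$; this recovers $k=1$. But $\omega'=2nw^{n+1}/(w^2-1)$ is no longer a power of $w$, so the argument does not iterate as it stands. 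Your sentence ``this gives $|P^{(k)}(t)|\le|\omega^{(k)}(t)|$ directly'' restates the theorem rather than proving it.

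\textbf{The zero-counting sketch also fails as written.} If $\alpha$ satisfies $\cos\alpha\,T_n^{(k)}(t_0)+\sin\alpha\,S_n^{(k)}(t_0)=0$, then $h^{(k)}(t_0)=-\lambda P^{(k)}(t_0)\neq0$ for every $\lambda\ne0$. So no zero at $t_0$ can be produced. The workable normalization takes the maximizing $\alpha$ (value $\mathcal M_{n,k}(t_0)$, vanishing $\alpha$-derivative) and $\lambda=\mathcal M_{n,k}(t_0)/P^{(k)}(t_0)$. Even then, a double zero is two conditions, not one.

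More seriously, the $2n$ simple zeros of $\cos(n\theta-\alpha)-\lambda P(\cos\theta)$ lie on a full $\theta$-period. For $\sin\alpha\neq0$ this function is not even, so its two half-periods correspond to two different functions of $t$, namely $Q_{\pm\alpha}-\lambda P$. On $[-1,1]$, alternation forces only $n-1$ sign changes of $Q_\alpha-\lambda P$ when $0<\alpha<\pi$. Since $Q_\alpha-\lambda P$ is not a polynomial, there is no ``maximal'' zero count for its $t$-derivatives until one is proved.

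What is missing is a Chebyshev-system statement in $t$. For example, $S_n^{(n+1)}$ has constant sign on $(-1,1)$: write $S_n=c\,\frac{d^{n-1}}{dt^{n-1}}(1-t^2)^{n-1/2}$ and apply Leibniz' rule to $(1-t)^{n-1/2}(1+t)^{n-1/2}$. Then, by Rolle, every element of $\mathcal P_{n-k}\oplus\operatorname{span}\{S_n^{(k)}\}$ (here $\mathcal P_{n-k}$ denotes polynomials of degree at most $n-k$) with nonzero $S$-part has at most $n-k+1$ zeros in $(-1,1)$. This must still be combined with the endpoint behaviour $|S_n^{(k)}(t)|\asymp(1-t^2)^{1/2-k}$ near $\pm1$. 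It also needs some control of how the zeros of $\cos\alpha\,T_n^{(k)}+\sin\alpha\,S_n^{(k)}$ move with $\alpha$, to close the gap between forced and admissible zeros. None of this is in your proposal, and for $k\ge2$ it is the actual content of the theorem.
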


For $k=1$, the right-hand side in
\eqref{eq:classical-DuffinSchaeffer} reduces to
$n/\sqrt{1-t^2}$, and hence the Duffin--Schaeffer estimate recovers
\cref{thm:classical-Bernstein}. For higher derivatives it improves the global
endpoint bound from \cref{thm:classical-VAMarkov} through a substantial part of
the interval, although the endpoint estimate remains decisive sufficiently close to
$\pm1$. Duffin and Schaeffer also used their pointwise inequality to give a shorter
proof of V.~A. Markov's theorem.

A functional-analytic method for determining the exact pointwise constants was
initiated by Voronovskaja and later developed by Gusev
\cites{Voronovskaja,Gusev}. These methods characterize the extremal problem but do
not generally yield formulas that are easy to evaluate. For degrees two and three,
convex-geometric techniques lead instead to completely explicit expressions
\cite{AMRS}; see also \cite{FSMS}. The formulas needed for comparison are recalled
next.

\begin{theorem}[Exact low-degree Bernstein functions; \cite{AMRS}]
	\label{thm:classical-low-degree}
	For $-1\le t\le1$,
	\begin{equation}\label{eq:classical-B2}
		\mathcal B_2(t)=
		\begin{cases}
			\displaystyle\frac{1}{1-|t|},
			& |t|\le\frac12,\\[2mm]
			4|t|,
			& \frac12\le |t|\le1,
		\end{cases}
	\end{equation}
	
	\begin{equation}\label{eq:classical-B3}
		\mathcal B_3(t)=
		\begin{cases}
			\displaystyle 3(1-4t^2),
			& |t|\le\dfrac{\sqrt7-2}{6},\\[2mm]
			\displaystyle \frac{7\sqrt7+10}{9(1+|t|)},
			& \dfrac{\sqrt7-2}{6}\le |t|
			\le\dfrac{2\sqrt7-1}{9},\\[3mm]
			\displaystyle \frac{16|t|^3}{(9t^2-1)(1-t^2)},
			& \dfrac{2\sqrt7-1}{9}\le |t|
			\le\dfrac{1+2\sqrt7}{9},\\[3mm]
			\displaystyle \frac{7\sqrt7-10}{9(1-|t|)},
			& \dfrac{1+2\sqrt7}{9}\le |t|
			\le\dfrac{\sqrt7+2}{6},\\[3mm]
			\displaystyle 3(4t^2-1),
			& \dfrac{\sqrt7+2}{6}\le |t|\le1,
		\end{cases}
	\end{equation}
	and
	\begin{equation}\label{eq:classical-B32}
		\mathcal B_{3,2}(t)=
		\begin{cases}
			\displaystyle\frac{4}{1-9t^2},
			& |t|\le\frac19,\\[2mm]
			\displaystyle\frac{32}{9(1-|t|)^2},
			& \frac19\le |t|\le\frac13,\\[2mm]
			24|t|,
			& \frac13\le |t|\le1.
		\end{cases}
	\end{equation}
\end{theorem}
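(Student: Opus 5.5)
Since $P\mapsto |P^{(k)}(t)|$ is a continuous convex (indeed absolute-linear) functional on the finite-dimensional unit ball
\[
U_n:=\{P:\deg P\le n,\ \|P\|_{[-1,1]}\le1\},
\]
I would follow exactly the scheme of \cref{prop:KM}. The formulas for $\mathcal B_2$, $\mathcal B_3$ and $\mathcal B_{3,2}$ then reduce to maximizing $|P^{(k)}(t)|$ over $\ext(U_n)$, for $n=2,3$. By the symmetry $P(x)\mapsto P(-x)$ it suffices to take $0\le t\le1$. The first step is therefore to recall the Konheim--Rivlin description of the extreme points \cite{KonheimRivlin}: a polynomial in $U_n$ is extreme exactly when $|P|=1$ at no fewer than $n+1$ points of $[-1,1]$, counted with multiplicity in the natural sense.

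\textbf{Degree two.} Here $\ext(U_2)$ consists of $\pm1$ together with $\pm$ a one-parameter family of quadratics attaining $\pm1$ at three points. Concretely, these are the Chebyshev polynomial $T_2$ composed with an affine map of $[-1,1]$ into a larger interval, so that one endpoint of $[-1,1]$ is an alternation point. I would write them as
\[
P_\lambda(x)=T_2\bigl(\alpha_\lambda x+\beta_\lambda\bigr),
\]
with $\lambda$ ranging over an interval, together with their reflections. Then $P_\lambda'(t)=4\alpha_\lambda(\alpha_\lambda t+\beta_\lambda)$ is an explicit rational function of $\lambda$, and I would maximize it by one-variable calculus. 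The expected outcome is:
\begin{itemize}
\item for $t\ge1/2$ the maximum sits at the endpoint of the family, namely $T_2$ itself, giving $4t$;
\item for $t\le1/2$ there is an interior critical point, at which the value simplifies to $1/(1-t)$.
\end{itemize}
The two values agree at $t=1/2$, which is a consistency check on the phase change.

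\textbf{Degree three.} The extreme set $\ext(U_3)$ contains, besides isolated elements such as $\pm1$ and $\pm T_3$, several one- and two-parameter families. These are distinguished by which endpoints of $[-1,1]$ are touching points and by whether the polynomial is an affine reparametrization of $T_3$ or of a "Zolotarev-type" cubic with a degenerate alternation. I would treat $P\mapsto P'(t)$ and $P\mapsto P''(t)$ as linear functionals in the coefficients and proceed in two stages:
\begin{enumerate}
\item maximize the functional on each family separately, as a function of the family parameters, via Lagrange conditions;
\item compare the resulting candidate values.
\end{enumerate}
The breakpoints in \eqref{eq:classical-B3} and \eqref{eq:classical-B32}, namely $(\sqrt7\pm2)/6$, $(2\sqrt7\pm1)/9$, $1/9$ and $1/3$, should emerge as the values of $t$ at which two candidate maximizers cross, or at which an interior critical point reaches the boundary of its parameter interval. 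The appearance of $\sqrt7$ suggests that a quadratic in the parameter is solved at these transitions.

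\textbf{Sharpness and expected obstacle.} Sharpness is automatic in this approach, because every candidate is an element of $U_n$. The main obstacle is the middle ranges of $\mathcal B_3$, in particular the piece $16t^3/((9t^2-1)(1-t^2))$. There the maximizer is a genuinely moving interior point of a two-parameter family, and one must prove global dominance over all the other families, not just local optimality. For that step I would first try to eliminate parameters by using that at the optimum $P$ must touch $\pm1$ at points determined by $t$. This is a duality/equioscillation heuristic: the representing measure of the functional $P\mapsto P'(t)$ is supported on at most four points. Using it, the comparison becomes a finite family of explicit one-variable inequalities in $t$, which I would verify interval by interval.
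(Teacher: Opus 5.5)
Your strategy matches the method the paper points to. The paper gives no proof of this theorem; it quotes it from \cite{AMRS} and attributes it to the convex-geometric method, namely the Krein--Milman reduction (\cref{prop:KM}) over the Konheim--Rivlin extreme points of the unit ball of $\mathcal P_n$ on $[-1,1]$. Your degree-two sketch is correct. For $0\le t\le\frac12$ the maximizer is $T_2\bigl((x-2t+1)/(2(1-t))\bigr)$, which touches at $x=1$ and has its vertex at $2t-1$; its value is $1/(1-t)\ge4t$. The mirror family touching at $x=-1$ contributes only $\max\{4t,1-t\}$, and for $t\ge\frac12$ both families peak at $T_2$. One small correction: the end members $(x^2\pm2x-1)/2$ touch $\pm1$ at only two points, so ``three points'' must be read with multiplicity.

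For degree three, however, you have a plan rather than a proof. Its two concrete structural predictions are also wrong, so following them would send the computation to the wrong place. By the Konheim--Rivlin criterion, $\ext(U_3)$ consists of three kinds of elements. First, $\pm1$. Second, the two-parameter family $\pm T_3(\alpha x+\beta)$ with $|\beta|\le\frac14$ and $\frac12+|\beta|\le\alpha\le1-|\beta|$; so $T_3$ is a corner of this family, not an isolated point. Third, one-parameter families with one interior touching point and both endpoints, namely
\[
 P_{x_0}(x)=-1+\frac{(x-x_0)^2(4x_0x+2+2x_0^2)}{(1-x_0^2)^2}\quad\bigl(|x_0|\le\tfrac13\bigr),
 \qquad
 1+\frac{(x-x_0)^2(x-1)}{(1+x_0)^2}\quad\bigl(-\tfrac12\le x_0\le1\bigr),
\]
together with their reflections and negatives.

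On the two-parameter family, $\alpha T_3'(\alpha t+\beta)$ has no interior critical point: $\partial_\beta=0$ forces $\alpha t+\beta=0$, and then $\partial_\alpha=T_3'(0)=-3$. So the middle piece $16t^3/((9t^2-1)(1-t^2))$ is \emph{not} a moving interior point of a two-parameter family. It is $P_{x_0}'(t)$ at $x_0=(3t^2-1)/(2t)$, which lies in $[-\frac13,\frac13]$ exactly when $\frac{2\sqrt7-1}9\le t\le\frac{1+2\sqrt7}9$.

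The pieces $\frac{7\sqrt7\pm10}{9(1\pm t)}$ come from the edges $T_3(\alpha(x\pm1)\mp1)$, $\frac34\le\alpha\le1$, by maximizing $(1\pm y)T_3'(y)$. This leads to $12y^2\pm8y-1=0$, which is where $\sqrt7$ enters, and the breakpoints are where $\alpha$ reaches $1$ or $\frac34$. Likewise, $\mathcal B_{3,2}$ is realized by $P_{3t}$ on $[0,\frac19]$, by $T_3(\alpha(x-1)+1)$ with $\alpha=2/(3(1-t))$ on $[\frac19,\frac13]$, and by $T_3$ afterwards. So every breakpoint is a moving maximizer reaching the end of its parameter range, where it meets the adjacent family; none is a crossing of unrelated candidates.

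With this explicit list, what remains is exactly the cross-family comparison you describe, a finite set of one-variable inequalities. Until that list is written down and those comparisons are carried out, your argument does not establish the degree-three formulas.
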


The corresponding extremal polynomials, including the changes of extremizer at the
breakpoints in \eqref{eq:classical-B2}--\eqref{eq:classical-B32}, are described in
\cite{AMRS}. 

The problem considered below is different in two important respects: our polynomials
are homogeneous quadratic forms in two variables, and their norm is taken on the
unbalanced planar body $\OO$, rather than on an interval. Nevertheless, the
classical distinction survives unchanged. The pointwise quantity is the best
Euclidean gradient bound at a prescribed point, while its maximum over $\OO$ is the
global Markov constant. The complete extreme-point description recalled in the
preceding section will allow us to compute both quantities exactly.

For $P\in\Ptwo$, write
$\nabla P=(\partial P/\partial x,\partial P/\partial y)$ and use $\|\cdot\|_2$
for the Euclidean norm on $\R^2$.  For \((x,y)\in\R^2\), define
\begin{equation}\label{eq:Bernstein-def}
 \Bern(x,y):=\sup\{\|\nabla P(x,y)\|_2:P\in\Ptwo,\ \normO{P}\le1\}.
\end{equation}
The supremum is a maximum because \(\BO\) is compact.  It is also the operator norm
of the linear map
\[
 (a,b,c)\longmapsto(2ax+by,bx+2cy)
\]
from \((\R^3,\normO{\cdot})\) to \(\ell_2^2\), and therefore \(\Bern\) is continuous.
Moreover,
\begin{equation}\label{eq:Bernstein-symmetry}
 \Bern(tx,ty)=t\Bern(x,y)\quad(t\ge0),
 \qquad
 \Bern(x,y)=\Bern(y,x).
\end{equation}

For a coefficient triple \(P=(a,b,c)\), set
\begin{equation}\label{eq:GP}
 G_P(r):=\left(a+\frac b2r\right)^2+
          \left(\frac b2+cr\right)^2,
 \qquad 0\le r\le1.
\end{equation}
If \(x\ge y\ge0\), \(x>0\), and \(r=y/x\), then
\begin{equation}\label{eq:gradient-normalized}
 \|\nabla P(x,y)\|_2^2=4x^2G_P(r).
\end{equation}

The next lemma is the key simplification: although \(\ext(\BO)\) contains four curves and four pairs of isolated points, only one curve can maximize
the Euclidean gradient.

\begin{lemma}[Dominant extreme curve]\label{lem:dominance}
For every \(0\le r\le1\),
\begin{equation}\label{eq:dominant-curve}
 \max_{P\in\ext(\BO)}G_P(r)
 =\max_{\tau\le u\le1}H(r,u),
\end{equation}
where
\begin{equation}\label{eq:H}
 H(r,u):=
 \left[-1+\frac{1+3u}{2}r\right]^2+
 \left[\frac{1+3u}{2}+\frac{1-3u^2}{2}r\right]^2.
\end{equation}
Thus the right-hand side is the contribution of the curve \(\Gamma_2\).
\end{lemma}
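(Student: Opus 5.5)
Since $G_P=G_{-P}$, it suffices to compare $G_P(r)$ over $\Gamma_1,\Gamma_3,\Gamma_4$ and $p_0,\dots,p_3$ with $\max_{\tau\le u\le1}H(r,u)$. For each competitor I will show $G_P(r)\le H(r,1)=G_{P_*}(r)$ or $G_P(r)\le H(r,\tau)$ on $[0,1]$. By \eqref{eq:H},
\[
G_{P_*}(r)=(-1+2r)^2+(2-r)^2=5r^2-8r+5,
\]
which lies between $2$ (at $r=1$) and $5$ (at $r=0$) and has minimum $9/5$ at $r=4/5$.

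\textbf{Isolated points.} We have $G_{p_0}=r^2\le1$, $G_{p_1}=1$, $G_{p_2}=\tfrac14\bigl((1+r)^2+(1+r)^2\bigr)=\tfrac12(1+r)^2\le2$, and
\[
G_{p_3}=(1-\tau r/2)^2+(r-\tau/2)^2\le 1+1=2 .
\]
All of these are at most $9/5$ except near $r=1$. Since $G_{P_*}(r)\ge 2$ fails only on a middle interval, I will compare them more finely. At $r=1$, $G_{p_2}=2=G_{P_*}(1)$, which is a tie and not a violation. For $r\in[0,1]$, $\tfrac12(1+r)^2\le 5r^2-8r+5$ is equivalent to $9r^2-18r+9\ge0$, which always holds. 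For $p_3$ and $p_1$ the analogous quadratic inequalities are routine.

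\textbf{Curves $\Gamma_1,\Gamma_4,\Gamma_3$.} On $\Gamma_1$,
\[
G=(-1+sr)^2+(s+(1-s^2)r)^2 .
\]
Here $s\le\tau$ makes $-1+sr\in[-1,-1+\tau]$ and the second term at most $(\tau+r)^2$, so $G\le 1+(\tau+r)^2$. I will compare this with $5r^2-8r+5$. The difference is $4r^2-(8+2\tau)r+4-\tau^2$, which is positive for small $r$ but may fail near $r=1$. There I will instead compare with $H(r,u)$ at the $u$ that attains the max, or simply with $H(r,\tau)$. Note that $\gamma_2(\tau)=(-1,1+3\tau,(1-3\tau^2)/2)$, and that $\gamma_1(\tau)=(-1,2\tau,1-\tau^2)=(-1,2\tau,2\tau)$ is a shared endpoint of the curves. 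A cleaner route is to show that $G_{\gamma_1(s)}(r)$ is increasing in $s$, so that it is maximized at $s=\tau$. Since $\gamma_1(\tau)$ should coincide with a point on $\Gamma_2$'s closure ($1+3\tau=2\tau$? no), I must check this directly.

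For $\Gamma_4$ the first component is $1-s^2+sr\in[1-\tau^2,1+\tau]$ and the second is $s-r$, so $G\le(1+\tau)^2+1\approx 6.8$. This is too crude, so I need the exact bound $G\le (1+\tau r)^2+\tau^2$. I will verify that this is at most $\max(H(r,1),H(r,\tau))$ by a one-variable polynomial inequality.

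$\Gamma_3$ is the mirror of $\Gamma_2$ under $a\leftrightarrow c$. Its gradient at $(x,rx)$ equals the gradient of the $\Gamma_2$ polynomial at $(rx,x)$, which is large when $y$ is large. Concretely,
\[
G_{\gamma_3(u)}(r)=\Bigl(\tfrac{1-3u^2}{2}+\tfrac{1+3u}{2}r\Bigr)^2+\Bigl(\tfrac{1+3u}{2}-r\Bigr)^2 .
\]
I will bound this termwise by the corresponding $\Gamma_2$ terms with the same $u$: $|\tfrac{1+3u}{2}-r|\le \tfrac{1+3u}{2}+\tfrac{1-3u^2}{2}r$ when $1-3u^2\ge -2$, which holds since $u\le1$. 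I will also compare the first terms in absolute value.

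\textbf{Main obstacle.} The hard step is the uniform two-variable inequalities for the curves, most likely $\Gamma_3$ and $\Gamma_4$ near $r=1$. My first approach is to compare each curve pointwise, at the same parameter, with $\gamma_2$, reducing each case to the sign of a polynomial in $(r,u)$ on a rectangle. I will then check the sign at the corners and on the edges where monotonicity in one variable holds.
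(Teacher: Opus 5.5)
\textbf{Verdict: genuine gap.} Your treatment of the isolated points is correct and agrees with the paper; for example $H(r,1)-\tfrac12(1+r)^2=\tfrac92(r-1)^2$, and the cases $p_0,p_1,p_3$ are indeed negative-discriminant quadratics. The curve part is where the proof fails. None of your three plans works, and the trouble sits exactly where you suspected, near $r=1$, where the margins are tiny. At $r=1$ one has $G=2$ for $P_*$, for $\gamma_1(0)=(-1,0,1)$ and for $p_2$, while $\max_u H(1,u)=H(1,\omega(1))\approx2.033$ and $H(1,\tau)\approx1.875$. Consequently:
\begin{itemize}
\item[(i)] For $\Gamma_4$, your bound $(1+\tau r)^2+\tau^2$ equals $5-2\sqrt2\approx2.17$ at $r=1$. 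This exceeds $\max\{H(1,1),H(1,\tau)\}=2$ and even the true maximum, so the one-variable inequality you announce is false.
\item[(ii)] For $\Gamma_1$, monotonicity in $s$ is false. From $H_1(r,s)=1+r^2+s^2(1-r^2)-2s^3r+s^4r^2$ one gets $\partial_sH_1=2s\,[(1-r^2)-3sr+2s^2r^2]$, which is negative at $r=1$. Indeed $H_1(1,s)=2-2s^3+s^4$ is maximized at $s=0$. The crude bound $1+(\tau+r)^2$ also fails near $r=1$, as you noticed.
\item[(iii)] For $\Gamma_3$, a termwise comparison cannot work, because at $r=1$ the two gradient components of $\gamma_3(u)$ and $\gamma_2(u)$ are swapped. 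At $r=1$, $u=\tau$ the first component is about $1.364$ for $\Gamma_3$ but about $0.121$ for $\Gamma_2$. Your opening strategy of dominating every competitor by $H(r,1)$ or $H(r,\tau)$ is also impossible for $\Gamma_3$, since $G_{\gamma_3(u)}(1)=H(1,u)$.
\end{itemize}

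\textbf{Missing idea.} Compare the sums of squares exactly rather than termwise. For any $P=(a,b,c)$ and its mirror $P'=(c,b,a)$, a one-line expansion gives $G_P(r)-G_{P'}(r)=(a^2-c^2)(1-r^2)$. On $\Gamma_1$ and $\Gamma_2$ one has $|a|=1\ge|c|$, so at the same parameter
\[
H_1-H_4=s^2(1-r^2)(2-s^2)\ge0,\qquad H-H_3=\tfrac34(1-r^2)(1-u^2)(3u^2+1)\ge0 .
\]
This disposes of $\Gamma_4$ and $\Gamma_3$ at once. For $\Gamma_1$ the paper puts $d=1-r$ and obtains
\[
H(r,1)-H_1(r,s)=(4+s^2-s^4)d^2+2s^2(s^2-s-1)d+s^3(2-s).
\]
This is a quadratic in $d$ with positive leading coefficient and discriminant $4s^3(5s-8)<0$, so all of $\Gamma_1$ is dominated by the single point $P_*$. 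Replacing your three bounds with these exact identities, the rest of your outline goes through.
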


\begin{proof}
Negation does not change \(G_P\), so only the upper curves and the four isolated
points need be considered.  The contributions of \(\Gamma_1,\Gamma_4\), and
\(\Gamma_3\) are, respectively,
\begin{align*}
 H_1(r,s)&=(-1+sr)^2+\bigl(s+(1-s^2)r\bigr)^2,\\
 H_4(r,s)&=(1-s^2+sr)^2+(s-r)^2,\\
 H_3(r,u)&=\left(\frac{1-3u^2}{2}+\frac{1+3u}{2}r\right)^2
          +\left(\frac{1+3u}{2}-r\right)^2.
\end{align*}
A direct simplification gives
\begin{equation}\label{eq:G1-G4}
 H_1(r,s)-H_4(r,s)=s^2(1-r^2)(2-s^2)\ge0,
\end{equation}
and
\begin{equation}\label{eq:G2-G3}
 H(r,u)-H_3(r,u)
 =\frac34(1-r^2)(1-u^2)(3u^2+1)\ge0.
\end{equation}
Hence it remains to compare \(\Gamma_2\) with \(\Gamma_1\) and the isolated points.

The endpoint \(u=1\) of \(\Gamma_2\) gives
\begin{equation}\label{eq:H-endpoint}
 H(r,1)=5r^2-8r+5.
\end{equation}
For \(s\in[0,\tau]\), put \(d=1-r\).  A direct expansion yields
\begin{align}
 H(r,1)-H_1(r,s)
 ={}&(4+s^2-s^4)d^2+2s^2(s^2-s-1)d+s^3(2-s).
 \label{eq:H-G1}
\end{align}
For \(s>0\), the right-hand side is a quadratic in \(d\) with positive leading
coefficient and discriminant
\[
 4s^3(5s-8)<0,
\]
because \(0<s\le\tau<1\).  It is therefore strictly positive for every real
\(d\).  For \(s=0\) it equals \(4d^2\).  Thus
\(H(r,1)\ge H_1(r,s)\) throughout the required parameter rectangle.

The isolated points in \eqref{eq:isolated} contribute, respectively,
\[
 r^2,\qquad 1,\qquad \frac{(1+r)^2}{2},\qquad
 J(r):=\left(1-\frac{\tau r}{2}\right)^2+
       \left(r-\frac\tau2\right)^2.
\]
The first three comparisons follow from
\begin{align*}
 H(r,1)-r^2&=4(r-1)^2+1,\\
 H(r,1)-1&=5\left(r-\frac45\right)^2+\frac45,\\
 H(r,1)-\frac{(1+r)^2}{2}&=\frac92(r-1)^2.
\end{align*}
Finally,
\begin{equation}\label{eq:H-p3}
 H(r,1)-J(r)
 =\frac{5-\sqrt2}{4}
 \bigl[(3+\sqrt2)(r^2+1)-8r\bigr]>0,
\end{equation}
whose last quadratic is positive because its discriminant is
\(20-24\sqrt2<0\).  Combining these comparisons with
\eqref{eq:G1-G4}--\eqref{eq:G2-G3} proves \eqref{eq:dominant-curve}.
\end{proof}

We can now optimize explicitly along \(\Gamma_2\).  For \(0<r\le1\), set
\begin{equation}\label{eq:dru}
 d_r(u):=6r^2u^3-9ru^2+(r^2-2r+3)u+(r^2-r+1).
\end{equation}
Then
\begin{equation}\label{eq:H-derivative}
 \frac{\partial H}{\partial u}(r,u)=\frac32d_r(u).
\end{equation}

\begin{theorem}[Exact Euclidean Bernstein function]\label{thm:Bernstein}
Let \((x,y)\in\OO\), put \(m=\max\{x,y\}\), and, when \(m>0\), set
\[
 r:=\frac{\min\{x,y\}}{m}\in[0,1].
\]
For \(1/2<r\le1\), define
\begin{align}
 p(r)&:=\frac{2r^2-4r-3}{12r^2},
 &q(r)&:=\frac{2r-1}{12r},\label{eq:pq}\\
 \vartheta(r)&:=\frac13\arccos\left(
 \frac{3q(r)}{2p(r)}\sqrt{-\frac3{p(r)}}\right),\label{eq:vartheta}\\
 \omega(r)&:=\frac1{2r}+2\sqrt{-\frac{p(r)}3}
 \cos\left(\vartheta(r)-\frac{2\pi}{3}\right).
 \label{eq:omega-cardano}
\end{align}
Extend \(\omega\) to \([0,1]\) by
\begin{equation}\label{eq:omega}
 \omega(r):=
 \begin{cases}
 1,&0\le r\le\frac12,\\
 \text{the value in \eqref{eq:omega-cardano}},&\frac12<r\le1.
 \end{cases}
\end{equation}
Then \(\Bern(0,0)=0\), and for \(m>0\),
\begin{equation}\label{eq:Bernstein-formula}
\displaystyle
 \Bern(x,y)=2m\sqrt{H\bigl(r,\omega(r)\bigr)}.
\end{equation}
For \(r>1/2\), the number \(\omega(r)\) is the unique zero of \(d_r\) in
\((\tau,1)\).  In particular, \(\tau<\omega(r)<1\).

If \(x\ge y\), equality in \eqref{eq:Bernstein-formula} is attained by the norm-one
polynomial
\begin{equation}\label{eq:Bernstein-extremizer}
 P_r(\xi,\eta)
 =-\xi^2+\bigl(1+3\omega(r)\bigr)\xi\eta
 +\frac{1-3\omega(r)^2}{2}\eta^2,
\end{equation}
and by its negative.  For \(y\ge x\), the first and third coefficients are
interchanged.
\end{theorem}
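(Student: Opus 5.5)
The proof reduces in three steps to a one-variable maximization of $H(r,\cdot)$ over $[\tau,1]$.

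\emph{Reduction.} By \eqref{eq:Bernstein-symmetry} it suffices to treat $x\ge y\ge0$ with $m=x>0$ and $r=y/x$; the case $y\ge x$ follows by swapping $(a,c)$, which is the stated interchange of coefficients. The map $P\mapsto\|\nabla P(x,y)\|_2^2=4x^2G_P(r)$ is a continuous convex functional on $\BO$, being the square of a seminorm. So \cref{prop:KM} and \eqref{eq:KM-reduction} give
\[
 \Bern(x,y)^2=4x^2\max_{P\in\ext(\BO)}G_P(r)=4x^2\max_{\tau\le u\le1}H(r,u)
\]
by \cref{lem:dominance}. The maximizer is $\pm\gamma_2(\omega(r))$, which is $\pm P_r$ and has norm one by \cref{thm:geometric-input}. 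The case $(0,0)$ is trivial.

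\emph{Analysis of $d_r$ via \eqref{eq:H-derivative}.} Compute
\[
 d_r(1)=6r^2-9r+r^2-2r+3+r^2-r+1=8r^2-12r+4=4(2r-1)(r-1).
\]
For $r=0$, $d_0(u)=3u+1>0$. For $r\in(0,1)$:
\begin{itemize}
 \item if $r\le1/2$, then $d_r(1)\ge0$;
 \item if $1/2<r<1$, then $d_r(1)<0$.
\end{itemize}
For $r=1$, $d_1(1)=0$. This needs care: I expect $d_1(u)=6u^3-9u^2+2u+1=(u-1)(6u^2-3u-1)$, whose root in $(\tau,1)$ is $(3+\sqrt{33})/12\approx0.729$, so $\omega(1)<1$ there.

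Next I would show $d_r>0$ on $[\tau,u_0)$ and that $d_r$ has at most one sign change on $[\tau,1]$. The plan is to use the shape of the cubic:
\[
 d_r'(u)=18r^2u^2-18ru+(r^2-2r+3).
\]
Its discriminant is $324r^2-72r^2(r^2-2r+3)=36r^2(9-2r^2+4r-6)\cdot$(some positive factor). In any case $d_r$ has a local max and then a local min, at $u=\tfrac1{2r}\pm\dots$. Then I check:
\begin{itemize}
 \item $d_r(\tau)>0$ for all $r\in[0,1]$, a quadratic inequality in $r$ with $\tau$-coefficients that is checked directly;
 \item for $r\le 1/2$, $d_r\ge0$ on all of $[\tau,1]$, since the local minimum of the cubic either lies right of $1$ or its value is nonnegative. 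Alternatively, write $d_r(u)$ as a polynomial in $r$ and bound it from below for $u\in[\tau,1]$, $r\le1/2$.
\end{itemize}
Hence for $r\le1/2$, $H(r,\cdot)$ is nondecreasing and $\omega=1$. For $r>1/2$, $d_r(\tau)>0>d_r(1)$ together with monotone behaviour between the critical points gives a unique zero $\omega(r)\in(\tau,1)$ where $H$ attains its maximum. At $r=1$, where $d_1(1)=0$ as well, the factorization above handles it.

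\emph{Cardano identification.} Substitute $u=w+\tfrac1{2r}$ to depress the cubic $d_r(u)/(6r^2)$. I expect exactly $w^3+p(r)w+q(r)=0$ with $p,q$ as in \eqref{eq:pq}; this is to be verified by expansion. Since $p<0$ and there are three real roots, the trigonometric form gives roots
\[
 \tfrac1{2r}+2\sqrt{-p/3}\cos\bigl(\vartheta-2\pi k/3\bigr),\qquad k=0,1,2.
\]
Ordering them shows the $k=1$ root is the middle root. This is the one lying in $(\tau,1)$: the largest root exceeds $1$ because $d_r(1)<0$ with positive leading coefficient, and the smallest lies below $\tau$ because $d_r(\tau)>0$ is attained before the local maximum.

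The main obstacle is the sign analysis of $d_r$ on $[\tau,1]$ uniformly in $r$, especially proving $d_r\ge0$ for $r\le1/2$ and locating the local extremum relative to $\tau$. I would first try to write $d_r(u)$, for $u\in[\tau,1]$, as a sum of manifestly nonnegative terms plus a multiple of $(1-u)(1-2r)$. Failing that, I would do the explicit critical-point comparison.
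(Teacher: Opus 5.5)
Your proposal is correct. The architecture matches the paper: Krein--Milman plus \cref{lem:dominance} reduce the problem to $\max_{\tau\le u\le1}H(r,u)$; you then use the signs of $d_r$ at $\tau$ and at $1$, the shift $u=w+1/(2r)$, and the middle Cardano root. The genuine difference is how you localize the zeros of $d_r$.

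The paper uses Descartes' rule of signs together with $d_r(0)>0$, $d_r(1/r)=r^2-1<0$ and the negative product of the roots. This shows at once, for every $0<r<1$, that $d_r$ has one negative root, one root in $(0,1/r)$ and one in $(1/r,\infty)$. The values $d_r(1)$ and $d_r(\tau)$ then place the small positive root.

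You instead use the critical points $\frac1{2r}\pm\frac{\sqrt{3+4r-2r^2}}{6r}$ of $d_r$. The discriminant of $d_r'$ is exactly $36r^2(3+4r-2r^2)$, with no extra factor. This works as follows.
\begin{itemize}
\item For $0<r\le1/2$, the local minimum exceeds $1/(2r)\ge1$. So $d_r$ is increasing-then-decreasing on $[\tau,1]$, and $d_r\ge\min\{d_r(\tau),d_r(1)\}\ge0$ there. Of your two ``either/or'' alternatives, it is the first that always occurs, and you should state it as such.
\item For $1/2<r<1$, you need no critical-point information at all. The inequalities $d_r(\tau)>0>d_r(1)$ and the limits at $\pm\infty$ give three real roots separated by $\tau$ and $1$. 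That yields uniqueness of the zero in $(\tau,1)$, the sign pattern making it the maximizer of $H(r,\cdot)$, and the three real roots needed for the trigonometric formula.
\end{itemize}
Your route ties the phase transition at $r=1/2$ to the inflection point $1/(2r)$, which is also the Cardano shift. The paper's route avoids computing critical points and handles all $r$ with a single global root count. Both routes rely on $d_r(\tau)>0$, which you leave as ``checked directly''; it does hold, via the paper's explicit quadratic in $r$ with negative discriminant $108-80\sqrt2$.

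One justification must be fixed. You say the smallest root lies below $\tau$ ``because $d_r(\tau)>0$ is attained before the local maximum.'' For $1/2\le r\le1$ the local maximum $\frac{3-\sqrt{3+4r-2r^2}}{6r}$ is actually smaller than $\tau$; at $r=1$ it is about $0.13$. The conclusion is still true, by the intermediate value theorem from $d_r(\tau)>0$ and $d_r\to-\infty$. Alternatively, as in the paper, $d_r(0)=r^2-r+1>0$ forces a negative root.
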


\begin{proof}
By \cref{thm:geometric-input,lem:dominance} and
\eqref{eq:gradient-normalized}, it remains only to maximize \(H(r,u)\) on
\(\tau\le u\le1\).

For \(r=0\), \(H(0,u)=1+(1+3u)^2/4\), so the maximum is attained at \(u=1\).
Assume \(0<r<1\).  The coefficient sequence of the cubic \(d_r\) has two sign
changes, while
\begin{equation}\label{eq:d-signs}
 d_r(0)>0,
 \qquad d_r(1/r)=r^2-1<0,
 \qquad d_r(u)\longrightarrow+\infty\quad(u\to+\infty).
\end{equation}
Descartes' rule therefore shows that \(d_r\) has exactly two positive zeros: one
in \((0,1/r)\) and one in \((1/r,\infty)\).  Since the product of the three roots
is negative, the remaining root is negative.  Thus \(d_r\) is positive before its
first positive zero, negative between the two positive zeros, and positive again
after the second one.

At the endpoint \(u=1\),
\begin{equation}\label{eq:d-one}
 d_r(1)=4(2r-1)(r-1).
\end{equation}
If \(0<r\le1/2\), then \(d_r(1)\ge0\).  The second positive zero lies after
\(1/r>1\) when \(r<1\).  The sign pattern just established therefore forces the
first positive zero to lie in \([1,1/r)\); at \(r=1/2\) it is exactly \(u=1\).
Thus \(d_r\ge0\) on \([\tau,1]\), so \(H(r,\cdot)\) is increasing there and its
maximum occurs at \(u=1\).

Now let \(1/2<r<1\).  The value at the other endpoint is
\begin{equation}\label{eq:d-tau}
 d_r(\tau)
 =(31\sqrt2-42)r^2+(16\sqrt2-26)r+(3\sqrt2-2)>0.
\end{equation}
Indeed, its leading coefficient is \(31\sqrt2-42>0\), while its discriminant as a
quadratic in \(r\) is \(108-80\sqrt2<0\).  Hence it is positive for every real
\(r\).  Since \(d_r(1)<0\), the first positive zero lies
in \((\tau,1)\), and it is the unique maximizer of \(H(r,\cdot)\) on that interval.
For \(r=1\), the second positive zero is the endpoint \(u=1\), while the first one
remains in \((\tau,1)\) and is again the maximizer.

To write the first positive zero in closed form, substitute \(u=z+1/(2r)\) in
\(d_r(u)=0\).  The equation becomes
\[
 z^3+p(r)z+q(r)=0,
\]
with \(p,q\) as in \eqref{eq:pq}.  The preceding root count shows that all three
roots are real, and \(p(r)<0\).  Cardano's trigonometric formula gives
\[
 z_k=2\sqrt{-\frac{p(r)}3}
 \cos\left(\vartheta(r)-\frac{2k\pi}{3}\right),
 \qquad k=0,1,2.
\]
For \(0\le\vartheta(r)\le\pi/3\), the three displayed values are ordered as
\(z_2\le z_1\le z_0\).  Restoring the common shift preserves this ordering.  Since
the original cubic has one negative root and two positive roots, the middle root
\(k=1\) is the smaller positive root, and hence precisely the one for which
\(\tau<u<1\).  Restoring the shift yields \eqref{eq:omega-cardano}.  The polynomial
in \eqref{eq:Bernstein-extremizer} is the point \(\gamma_2(\omega(r))\) of the
extreme curve, so it has norm one and attains equality.
\end{proof}

\begin{remark}[The phase transition]\label{rem:transition}
For every direction with \(0\le r\le1/2\), the extremizer in
\cref{thm:Bernstein} is the single polynomial
\[
 P_*(\xi,\eta)=-\xi^2+4\xi\eta-\eta^2.
\]
At \(r=1/2\), the maximizer leaves the endpoint \(u=1\) of \(\Gamma_2\) and moves
into the relative interior of the curve.  At the diagonal direction,
\[
 \omega(1)=\frac{3+\sqrt{33}}{12}.
\]
\end{remark}

The homogeneity of \(\Bern\) makes its restriction to the radial boundary especially
informative.  On the half of the boundary with \(x\ge y\), write the point as
\[
 (x,y)=
 \begin{cases}
 (1,r),&0\le r\le\tau,\\[1mm]
 \displaystyle\left(\frac{\sqrt2}{1+r},\frac{\sqrt2r}{1+r}\right),
 &\tau\le r\le1.
 \end{cases}
\]
Hence \eqref{eq:Bernstein-formula} becomes
\begin{equation}\label{eq:boundary-profile}
 \Bern(x,y)=
 \begin{cases}
 2\sqrt{H(r,\omega(r))},&0\le r\le\tau,\\[1mm]
 \displaystyle\frac{2\sqrt2}{1+r}\sqrt{H(r,\omega(r))},&\tau\le r\le1.
 \end{cases}
\end{equation}

\begin{figure}[t]
\centering
\begin{minipage}[c]{.44\textwidth}
\centering
\includegraphics[width=\linewidth]{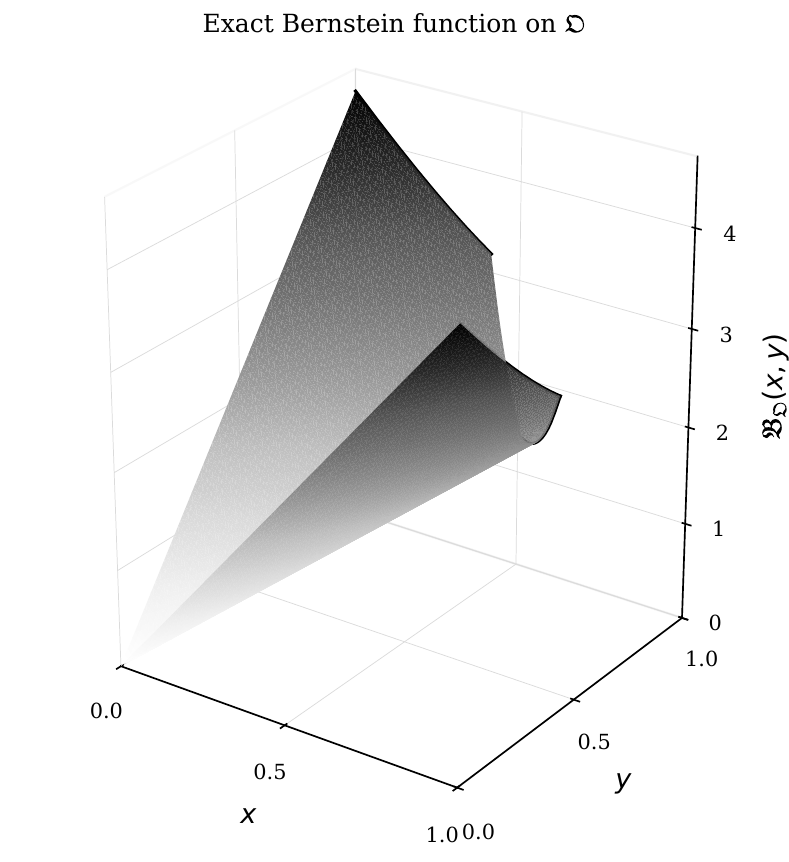}
\end{minipage}\hfill
\begin{minipage}[c]{.54\textwidth}
\centering
\includegraphics[width=\linewidth]{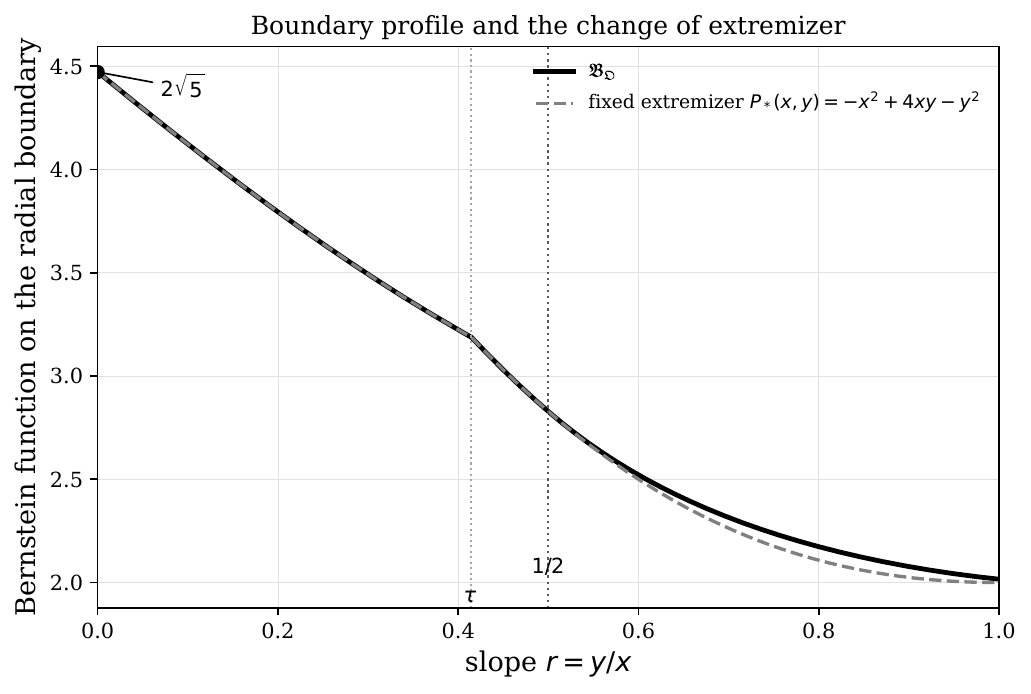}
\end{minipage}
\caption{The exact Bernstein function.  Left: the surface
\((x,y)\mapsto\Bern(x,y)\) over \(\OO\).  Right: its restriction to one half of the
radial boundary.  The dashed curve is the contribution of the fixed polynomial
\(P_*\); it coincides with the optimum until \(r=1/2\), where the active point begins
to move along \(\Gamma_2\).  The line \(r=\tau\) marks the corner \((1,\tau)\).}
\label{fig:Bernstein}
\end{figure}

\begin{figure}[t]
\centering
\includegraphics[width=.72\textwidth]{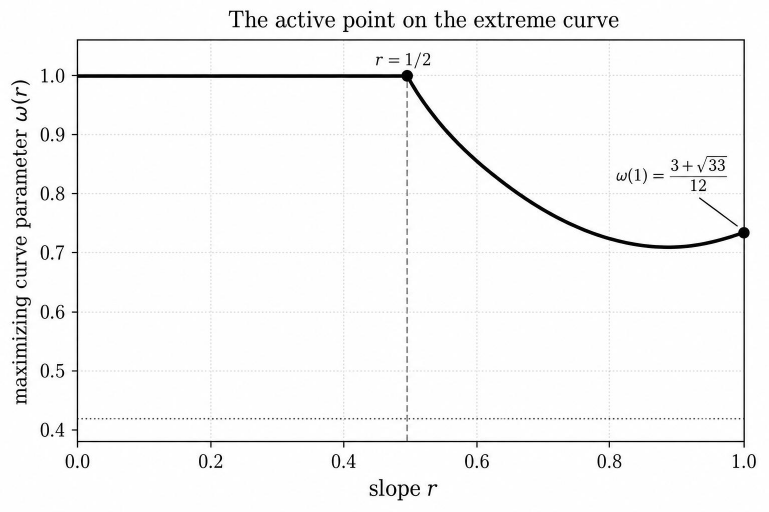}
\caption{The maximizing parameter \(\omega(r)\) on \(\Gamma_2\).  It remains at the
endpoint \(u=1\) for \(r\le1/2\), then follows the explicit Cardano branch in
\eqref{eq:omega-cardano}.}
\label{fig:active-parameter}
\end{figure}

\section{The sharp Euclidean Markov inequality}\label{sec:markov}

Define the Euclidean Markov constant by
\begin{equation}\label{eq:Markov-def}
 \Mark:=\max_{(x,y)\in\OO}\Bern(x,y).
\end{equation}

\begin{theorem}[Sharp Euclidean Markov constant]\label{thm:Markov}
For every \(P\in\Ptwo\),
\begin{equation}\label{eq:Markov-ineq}
\displaystyle
 \max_{(x,y)\in\OO}\|\nabla P(x,y)\|_2
 \le2\sqrt5\,\normO{P}.
\end{equation}
The constant is optimal, so
\begin{equation}\label{eq:Markov-value}
\Mark=2\sqrt5.
\end{equation}
Equality is attained at \((1,0)\) and \((0,1)\) by
\begin{equation}\label{eq:Markov-extremizer}
 P_*(x,y)=-x^2+4xy-y^2
\end{equation}
and by its negative.
\end{theorem}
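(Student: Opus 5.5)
Since $\Bern$ is the supremum of a family of convex functions of $(x,y)$, it is itself convex on $\R^2$. The plan is to use this to reduce the Markov problem to the vertices of $\OO$, and then read off the values from \cref{thm:Bernstein}.

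\emph{Convexity.} For each fixed $P\in\Ptwo$ the map $(x,y)\mapsto\nabla P(x,y)=(2ax+by,\,bx+2cy)$ is linear, so $(x,y)\mapsto\|\nabla P(x,y)\|_2$ is convex. By \eqref{eq:Bernstein-def}, $\Bern$ is a supremum of such functions and is therefore convex. It is also continuous (noted after \eqref{eq:Bernstein-def}). Hence, by the Krein--Milman reduction in the plane (\cref{prop:KM} applied to the compact convex set $\OO$), its maximum over $\OO$ is attained at an extreme point of $\OO$. These extreme points are the five vertices $(0,0)$, $(1,0)$, $(0,1)$, $(1,\tau)$, $(\tau,1)$. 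By the symmetry \eqref{eq:Bernstein-symmetry} it suffices to evaluate $\Bern$ at $(0,0)$, $(1,0)$ and $(1,\tau)$.

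\emph{Evaluation at the vertices.} We have $\Bern(0,0)=0$. At $(1,0)$ we have $m=1$ and $r=0\le 1/2$, so $\omega(0)=1$. By \eqref{eq:Bernstein-formula} and \eqref{eq:H-endpoint},
\[
\Bern(1,0)=2\sqrt{H(0,1)}=2\sqrt5 .
\]
At $(1,\tau)$ we have $m=1$ and $r=\tau=\sqrt2-1<1/2$, so again $\omega(\tau)=1$. This gives
\[
\Bern(1,\tau)=2\sqrt{5\tau^2-8\tau+5}.
\]
This value is strictly less than $2\sqrt5$ because $\tau(5\tau-8)<0$. Therefore $\Mark=2\sqrt5$. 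The inequality \eqref{eq:Markov-ineq} then follows from the definition of $\Bern$ together with homogeneity in $P$: apply the definition to $P/\normO{P}$ for $P\ne0$.

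\emph{Optimality and extremizer.} For optimality it is enough to check $P_*$ directly. We have $\nabla P_*(1,0)=(-2,4)$, whose Euclidean norm is $\sqrt{20}=2\sqrt5$. Also $P_*=\gamma_2(1)$ has norm one by \cref{thm:geometric-input}. The point $(0,1)$ follows by the symmetry $x\leftrightarrow y$ of $P_*$, and $-P_*$ follows trivially.

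The only step that needs care is the convexity reduction. Without it one would have to compare $\frac{2\sqrt2}{1+r}\sqrt{H(r,\omega(r))}$ with $2\sqrt5$ along the Cardano branch on $L_2$, which is messy. Convexity of $\Bern$ in the point variable avoids this entirely, since the corner $(1,\tau)$ lies in the regime $r\le1/2$ where the extremizer is the explicit polynomial $P_*$.
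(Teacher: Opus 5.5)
Your proof is correct and is essentially the paper's argument: convexity of $\Bern$ in the point variable reduces the maximum to the vertices of $\OO$, symmetry leaves only $(1,0)$ and $(1,\tau)$, and since both have $r\le 1/2$ the Bernstein formula gives $\Bern(1,0)^2=4H(0,1)=20$ and $\Bern(1,\tau)^2=4H(\tau,1)=4(28-18\sqrt2)<20$. Your direct computation $\nabla P_*(1,0)=(-2,4)$ is an extra check of the equality case, and it agrees with the paper's identification of $P_*$ as the endpoint $\gamma_2(1)$.
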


\begin{proof}
The function \(\Bern\) is convex, being the supremum of the convex functions
\((x,y)\mapsto\|\nabla P(x,y)\|_2\), \(P\in\BO\).  A convex function on a polygon is
bounded above by the largest of its values at the vertices.  The vertices of \(\OO\)
are
\[
 (0,0),\quad(1,0),\quad(1,\tau),\quad(\tau,1),\quad(0,1).
\]
The value at the origin is zero, and symmetry leaves only \((1,0)\) and
\((1,\tau)\) to compare.

At \((1,0)\), \cref{thm:Bernstein} gives \(r=0\), \(\omega(0)=1\), and
\[
 \Bern(1,0)^2=4H(0,1)=20.
\]
At \((1,\tau)\), one has \(\tau<1/2\), so again \(\omega(\tau)=1\), but
\[
 \Bern(1,\tau)^2=4H(\tau,1)
 =4(28-18\sqrt2)<20.
\]
Thus the maximum is \(2\sqrt5\).  The endpoint \(u=1\) of \(\Gamma_2\) is exactly
the polynomial \(P_*\), which proves the equality statement.
\end{proof}

\begin{remark}
The global extremizer is not selected by a separate optimization.  It is the same
endpoint that controls every point whose slope satisfies \(r\le1/2\).  The exact comparison above shows that the loss of radial scale at the non-axial
vertices outweighs any directional gain, so the global maximum occurs at the two
axial vertices.
\end{remark}

\section{The relative polarization constant}\label{sec:polarization}

We begin with the classical Banach-space statement in its full degree-$n$ form.
If $X$ is a real or complex Banach space, $P:X\to\mathbb K$ is a continuous
$n$-homogeneous polynomial, and $\check P:X^n\to\mathbb K$ is its unique symmetric
$n$-linear polar, put
\[
 \|P\|_X:=\sup_{\|x\|\le1}|P(x)|,
 \qquad
 \|\check P\|_X:=\sup_{\|x_j\|\le1}
       |\check P(x_1,\ldots,x_n)|.
\]

\begin{theorem}[Martin's polarization inequality, 1932]
\label{thm:Martin-classical}
For every such $P$,
\begin{equation}\label{eq:Martin-classical}
 \|P\|_X\le \|\check P\|_X
 \le \frac{n^n}{n!}\,\|P\|_X.
\end{equation}
The factor $n^n/n!$ is generally optimal: equality is attained for
$X=\ell_1^n$ and the Nachbin polynomial
$N_n(x_1,\ldots,x_n)=x_1\cdots x_n$.
\end{theorem}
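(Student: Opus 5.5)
The first inequality is immediate. Because $\check P(x,\ldots,x)=P(x)$, restricting the supremum defining $\|\check P\|_X$ to the diagonal $x_1=\cdots=x_n=x$ with $\|x\|\le1$ gives $\|P\|_X\le\|\check P\|_X$.

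For the upper bound I would use the polarization formula
\[
 \check P(x_1,\ldots,x_n)=\frac{1}{2^n n!}\sum_{\varepsilon_j=\pm1}
 \varepsilon_1\cdots\varepsilon_n\,P\Bigl(\sum_{j=1}^n\varepsilon_jx_j\Bigr).
\]
It is proved by expanding $P(\sum\varepsilon_jx_j)$ multilinearly through $\check P$. Averaging against $\varepsilon_1\cdots\varepsilon_n$ kills every term in which some index fails to appear exactly once. What survives is the $n!$ permutations of $(x_1,\ldots,x_n)$, each equal to $\check P(x_1,\ldots,x_n)$ by symmetry.

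With $\|x_j\|\le1$, homogeneity gives $|P(\sum\varepsilon_jx_j)|\le\|\sum\varepsilon_jx_j\|^n\|P\|_X\le n^n\|P\|_X$. The naive bound then yields $|\check P|\le\frac{1}{n!}n^n\|P\|_X$ after averaging over the $2^n$ signs, which is exactly $n^n/n!$. No finer estimate is needed, since the crude triangle inequality already produces the stated constant.

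For optimality, take $X=\ell_1^n$ and $N_n=x_1\cdots x_n$. By AM--GM, on the unit ball $|x_1\cdots x_n|\le(\sum|x_j|/n)^n\le n^{-n}$, with equality at $x=(1/n,\ldots,1/n)$, so $\|N_n\|=n^{-n}$. The polar is
\[
 \check N_n(x_1,\ldots,x_n)=\frac1{n!}\sum_{\pi}\prod_{j}x_{\pi(j)}^{(j)} ,
\]
and evaluating at the basis vectors $e_1,\ldots,e_n$ gives $1/n!$. Hence $\|\check N_n\|\ge1/n!=\frac{n^n}{n!}\|N_n\|$, and equality holds.

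The main obstacle, if any, is justifying the polarization identity cleanly. That is a routine combinatorial expansion, and everything else is direct.
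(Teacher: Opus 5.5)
Your proof is correct. The lower bound is the same diagonal restriction $P(x)=\check P(x,\ldots,x)$ that the paper records. For the reverse estimate and its sharpness the paper gives no argument and simply defers to the literature it cites, where your route is exactly the standard proof: the sign-averaged polarization formula with the crude bound $\bigl\|\sum_j\varepsilon_jx_j\bigr\|\le n$, together with the Nachbin polynomial on $\ell_1^n$, for which $\|N_n\|=n^{-n}$ and $\check N_n(e_1,\ldots,e_n)=1/n!$.

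Your argument also shows why the paper must compute $\mathbf c_2(\OO)$ separately. The step $\sum_j\varepsilon_jx_j\in nB_X$ uses the symmetry of the unit ball, and for the sector $\OO$ the differences $x_1-x_2$ leave every dilate of the body.
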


The first inequality is immediate from $P(x)=\check P(x,\ldots,x)$; the content of
Martin's theorem is the dimension-free reverse estimate.  In degree two it becomes
\begin{equation}\label{eq:Martin-quadratic}
 \|P\|_X\le \|\check P\|_X\le2\|P\|_X.
\end{equation}
Thus the usual quadratic polarization constant
\[
 \mathbf c(2,X):=\sup_{P\ne0}
 \frac{\|\check P\|_X}{\|P\|_X}
\]
satisfies $1\le\mathbf c(2,X)\le2$.  See
\cites{Martin,Dineen,GJMMMS} for the theorem, its historical context, and further
polarization results.

The sector $\OO$ is not balanced and hence is not the unit ball of a normed space.
Martin's theorem therefore does not apply literally.  We instead compare the two
supremum norms generated by the same body; the resulting constant is relative to
$\OO$, not a Banach-space invariant.

Let \(P(x,y)=ax^2+bxy+cy^2\).  Its associated symmetric bilinear form is
\begin{equation}\label{eq:polar}
 \check P((x_1,y_1),(x_2,y_2))
 =ax_1x_2+\frac b2(x_1y_2+x_2y_1)+cy_1y_2.
\end{equation}
We equip it with the relative bilinear norm
\begin{equation}\label{eq:bilnorm-def}
 \bilnorm{\check P}:=
 \sup\{|\check P(z,w)|:z,w\in\OO\}.
\end{equation}
Since diagonal pairs are allowed, \(\normO{P}\le\bilnorm{\check P}\).

\begin{definition}\label{def:polarization}
The quadratic polarization constant relative to \(\OO\) is
\begin{equation}\label{eq:polarization-def}
 \mathbf c_2(\OO):=
 \sup_{P\ne0}\frac{\bilnorm{\check P}}{\normO{P}}.
\end{equation}
\end{definition}

The adjective ``relative'' emphasizes that \eqref{eq:polarization-def} compares
the polynomial and bilinear supremum norms induced by \(\OO\), rather than defining
a Banach-space invariant.  It should also not be confused with the asymptotic
polarization constant of a Banach space.

\begin{lemma}[Bilinear norm at the vertices]\label{lem:bilinear-norm}
For \(P=(a,b,c)\),
\begin{equation}\label{eq:bilinear-norm}
\begin{aligned}
 \bilnorm{\check P}=\max\Bigl\{&|a|,\ |c|,\ \frac{|b|}{2},\\
 &\left|a+\frac{\tau b}{2}\right|,
  \left|\tau a+\frac b2\right|,
  \left|\frac b2+\tau c\right|,
  \left|\frac{\tau b}{2}+c\right|,\\
 &|a+\tau b+\tau^2c|,
  |\tau^2a+\tau b+c|,
  \left|\tau(a+c)+(1-\tau)b\right|\Bigr\}.
\end{aligned}
\end{equation}
\end{lemma}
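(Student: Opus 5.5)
The approach is to use bilinearity and compactness of \(\OO\). For fixed \(w\in\OO\), the map \(z\mapsto\check P(z,w)\) is linear. The absolute value of a linear function is convex, so its maximum over the convex polygon \(\OO\) is attained at a vertex. The same holds in \(w\) for fixed \(z\). Applying this twice shows that \(\bilnorm{\check P}\) equals the maximum of \(|\check P(v,v')|\) over ordered pairs \((v,v')\) of vertices of \(\OO\). The vertices are
\[
(0,0),\ (1,0),\ (1,\tau),\ (\tau,1),\ (0,1).
\]
Pairs involving the origin give \(0\), and symmetry of \(\check P\) lets us use unordered pairs. This leaves the \(10\) unordered pairs of the four nonzero vertices, counting pairs of a vertex with itself.

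Next I evaluate \eqref{eq:polar} on each pair.
\begin{itemize}
\item \(\check P((1,0),(1,0))=a\) and \(\check P((0,1),(0,1))=c\).
\item \(\check P((1,0),(0,1))=b/2\).
\item \(\check P((1,0),(1,\tau))=a+\tau b/2\) and \(\check P((1,0),(\tau,1))=\tau a+b/2\).
\item \(\check P((0,1),(1,\tau))=b/2+\tau c\) and \(\check P((0,1),(\tau,1))=\tau b/2+c\).
\item \(\check P((1,\tau),(1,\tau))=a+\tau b+\tau^2 c\) and \(\check P((\tau,1),(\tau,1))=\tau^2a+\tau b+c\).
\item \(\check P((1,\tau),(\tau,1))=\tau a+\tfrac b2(1+\tau^2)+\tau c\).
\end{itemize}
The last one must match the listed term \(\tau(a+c)+(1-\tau)b\). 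Using \(\tau^2=1-2\tau\) from \eqref{eq:tau}, we get \((1+\tau^2)/2=1-\tau\), which gives the identity. These are exactly the ten expressions in \eqref{eq:bilinear-norm}.

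There is no real obstacle. The only points needing care are the justification of the vertex reduction, done coordinatewise through convexity in each argument separately, and the \(\tau\)-identity for the mixed corner pair.
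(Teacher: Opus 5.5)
Your proposal is correct and follows the paper's own argument. You reduce to vertices using convexity of $|\check P|$ in each argument separately, then evaluate the ten unordered pairs of nonzero vertices, and you verify the mixed corner term through $(1+\tau^2)/2=1-\tau$ exactly as the paper does.
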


\begin{proof}
The body \(\OO\) is the convex hull of
\[
 0,\quad e_1=(1,0),\quad e_2=(0,1),\quad
 v_1=(1,\tau),\quad v_2=(\tau,1).
\]
For fixed one variable, the absolute value of \(\check P\) is a convex function of the
other variable, and hence its maximum over a polygon occurs at a vertex.  Applying
this successively in the two variables reduces the norm to unordered pairs of the four
nonzero vertices.  Their ten values are exactly those displayed in
\eqref{eq:bilinear-norm}; here
\((1+\tau^2)/2=1-\tau\).
\end{proof}

\begin{theorem}[Exact polarization constant]\label{thm:polarization}
The relative quadratic polarization constant of the octagonal sector is
\begin{equation}\label{eq:polarization-value}
\mathbf c_2(\OO)=2.
\end{equation}
The constant is attained by \(P_*=-x^2+4xy-y^2\) and by its negative.
\end{theorem}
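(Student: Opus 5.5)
The approach is the Krein--Milman reduction. The map $P\mapsto\bilnorm{\check P}$ is a seminorm on $\Ptwo$, hence continuous and convex on $\BO$. Since $\normO{P}=1$ for every extreme point, \cref{prop:KM} together with \cref{thm:geometric-input} gives
\[
 \mathbf c_2(\OO)=\max_{P\in\ext(\BO)}\bilnorm{\check P}.
\]
Negation does not change the bilinear norm, so it suffices to evaluate the ten-term maximum of \cref{lem:bilinear-norm} on the curves $\Gamma_1,\dots,\Gamma_4$ and at $p_0,\dots,p_3$. We will prove that every such value is at most $2$, and that the value $2$ is reached at $\gamma_2(1)=(-1,4,-1)$.

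The lower bound is immediate. For $P_*$ the term $|b|/2=2$ comes from the pair $(e_1,e_2)$. Indeed $\check P_*(e_1,e_2)=2$, while $\normO{P_*}=1$ by \cref{thm:geometric-input}.

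For the upper bound, the isolated points are handled by direct evaluation:
\begin{itemize}
\item $p_0$ and $p_1$ give $1$;
\item $p_2=(\tfrac12,1,\tfrac12)$ gives at most $\tfrac12+\tau+\tfrac{\tau^2}{2}=1$;
\item $p_3=(1,-\tau,1)$ gives values such as $1-\tau^2+\tau^2=1$ and $2\tau-(1-\tau)\tau$, all of which are at most $1$.
\end{itemize}

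For each curve, every one of the ten expressions is an explicit polynomial in $s$ or $u$ of degree at most $2$. For $\Gamma_2$ the expressions are:
\begin{itemize}
\item $|b|/2=(1+3u)/2\le2$, with equality only at $u=1$;
\item $|a|=1$;
\item $|c|=|1-3u^2|/2\le1$;
\item $|a+\tau b/2|$ and $|\tau a+b/2|\le\tau+(1+3u)/2$. The naive triangle bound here exceeds $2$ near $u=1$. The actual value is $-\tau+(1+3u)/2\le 2-\tau$, because both $a$ and $b$ carry definite signs.
\end{itemize}
The remaining mixed terms, such as $|b/2+\tau c|$, $|a+\tau b+\tau^2c|$ and $|\tau(a+c)+(1-\tau)b|$, I will evaluate at both endpoints $u=\tau,1$. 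Each is the absolute value of a quadratic in $u$, so the bound follows from the endpoint values together with a sign and concavity check, or a vertex check when the quadratic is convex.

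For example, at $u=1$ the last term equals $|\tau(-2)+4(1-\tau)|=|4-6\tau|\approx1.51$. The $\Gamma_3$ case follows from $\Gamma_2$ by the symmetry $(a,b,c)\mapsto(c,b,a)$ and $x\leftrightarrow y$, which permutes the ten terms. Likewise $\Gamma_4$ mirrors $\Gamma_1$. On $\Gamma_1$ we have $|b|/2=s\le\tau$ and all other terms are plainly $\le 1+\tau^2+\dots$, which stays below $2$.

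The main obstacle is bookkeeping: verifying all ten sign-sensitive terms on $\Gamma_1$ and $\Gamma_2$ without relying on crude triangle-inequality bounds, which fail near $u=1$. I would organize this as a table of the ten linear forms evaluated at the parameter endpoints, with concavity or convexity of each quadratic in the parameter noted, so that each maximum is read off at an endpoint or at an explicit vertex.
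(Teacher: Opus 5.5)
Your proposal is correct and follows the same route as the paper: the Krein--Milman reduction to $\ext(\BO)$, the lower bound from the term $|b|/2=2$ at $P_*$, direct evaluation at $p_0,\dots,p_3$, termwise bounds from \cref{lem:bilinear-norm} on $\Gamma_1$ and $\Gamma_2$, and symmetry for $\Gamma_3$ and $\Gamma_4$. When you write it out, three points need care:
\begin{itemize}
\item The four diagonal terms are values of $P$ on $\OO$, so they are automatically at most $1$; the paper uses this to reduce the work to six cross terms.
\item Your bullet on $\Gamma_2$ conflates two terms. In fact $|a+\tau b/2|=1-\tau(1+3u)/2\le1$, and only $|\tau a+b/2|$ equals $(1+3u)/2-\tau$.
\item The terms $b/2+\tau c$ and $\tau(a+c)+(1-\tau)b$ are positive concave quadratics in $u$, so endpoint values alone do not bound them from above. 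You also need that their vertices lie beyond $u=1$; hence both increase on $[\tau,1]$, to $2-\tau$ and $4-6\tau$ respectively.
\end{itemize}
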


\begin{proof}
The map \(P\mapsto\bilnorm{\check P}\) is continuous and convex on \(\BO\), so
\cref{thm:geometric-input} reduces its maximum to the extreme set.

For \(P_*\), the term \(|b|/2\) in \eqref{eq:bilinear-norm} equals \(2\).  Since
\(P_*\in\Gamma_2\subset\ext(\BO)\), this proves
\(\mathbf c_2(\OO)\ge2\).

We prove the reverse inequality.  The isolated points satisfy
\begin{equation}\label{eq:isolated-bilnorm}
 \bilnorm{\check P_{p_j}}=1,
 \qquad j=0,1,2,3.
\end{equation}
Consider first \(\Gamma_1\), written as
\(P_s=(-1,2s,1-s^2)\), \(0\le s\le\tau\).  The four diagonal vertex values in
\eqref{eq:bilinear-norm} have absolute value at most one.  The six cross values are bounded by
\begin{align*}
 s&\le\tau,\\
 |-1+\tau s|&\le1,\\
 |s-\tau|&\le\tau,\\
 s+\tau(1-s^2)&\le\tau+\tau(1-\tau^2)<1,\\
 1-s^2+\tau s&=1+\frac{\tau^2}{4}-\left(s-\frac\tau2\right)^2
               \le1+\frac{\tau^2}{4},\\
 2(1-\tau)s-\tau s^2&\le\tau.
\end{align*}
For the fourth and sixth expressions, the corresponding functions are increasing on
\([0,\tau]\); the displayed bounds are their values at \(s=\tau\).  The fifth
bound is attained at \(s=\tau/2\).  Therefore
\begin{equation}\label{eq:G1-bilnorm}
 \sup_{P\in\Gamma_1}\bilnorm{\check P}
 =1+\frac{\tau^2}{4}<2.
\end{equation}
The same conclusion holds on \(\Gamma_4\) by symmetry.

Now let \(P_u\in\Gamma_2\), so
\[
 P_u=\left(-1,1+3u,\frac{1-3u^2}{2}\right),
 \qquad \tau\le u\le1.
\]
Again the diagonal values are at most one.  For the cross values,
\begin{align*}
 \frac{1+3u}{2}&\le2,\\
 \left|-1+\tau\frac{1+3u}{2}\right|&\le1,\\
 -\tau+\frac{1+3u}{2}&\le2-\tau<2,\\
 \frac{1+3u}{2}+\tau\frac{1-3u^2}{2}&\le2-\tau<2.
\end{align*}
The fourth expression is increasing on \([\tau,1]\).  The remaining vertex cross
term
\[
 g(u):=\tau\frac{1+3u}{2}+\frac{1-3u^2}{2}
\]
is decreasing on this interval and satisfies
\(g(\tau)=1/\sqrt2\) and \(g(1)=-\tau^2\); hence
\(|g(u)|\le1/\sqrt2<2\).  For the final cross pair \((v_1,v_2)\), substitution gives
\[
 h(u):=1-\frac{3\tau}{2}+3(1-\tau)u-\frac{3\tau}{2}u^2.
\]
Since \(h'(u)=3(1-\tau-\tau u)>0\) on \([\tau,1]\),
\[
 1=h(\tau)\le h(u)\le h(1)=4-6\tau<2.
\]
Therefore \(\bilnorm{\check P_u}\le2\) on \(\Gamma_2\), with equality at \(u=1\)
through the term \(|b|/2\).  The curve \(\Gamma_3\) is symmetric.  Together with
\eqref{eq:isolated-bilnorm}--\eqref{eq:G1-bilnorm}, this proves the upper bound and
hence \eqref{eq:polarization-value}.
\end{proof}

\begin{corollary}[Sharp coordinate bounds]\label{cor:coefficient-box}
For every \(P(x,y)=ax^2+bxy+cy^2\),
\begin{equation}\label{eq:coefficient-bound}
\displaystyle
 \max\left\{|a|,|c|,\frac{|b|}{4}\right\}\le\normO{P}.
\end{equation}
All three bounds are sharp.  Equivalently, the smallest axis-parallel box containing
\(\BO\) is
\begin{equation}\label{eq:coordinate-box}
 [-1,1]\times[-4,4]\times[-1,1].
\end{equation}
\end{corollary}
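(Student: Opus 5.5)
The bounds on $|a|$ and $|c|$ follow by evaluation, and the bound on $|b|$ from the polarization result. Since $(1,0)\in\OO$ and $(0,1)\in\OO$, we have $|a|=|P(1,0)|\le\normO{P}$ and $|c|=|P(0,1)|\le\normO{P}$. For the middle coefficient, note that $\check P(e_1,e_2)=b/2$ with $e_1,e_2\in\OO$. Hence
\[
 \frac{|b|}{2}\le\bilnorm{\check P}\le\mathbf c_2(\OO)\,\normO{P}=2\normO{P}
\]
by \cref{thm:polarization}, which gives $|b|\le4\normO{P}$.

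As an independent check, avoiding polarization, the bound on $|b|$ can also be obtained from \cref{prop:KM}. The functional $P\mapsto|b|$ is convex and continuous, so it suffices to take the maximum of $|b|$ over $\ext(\BO)$ from \cref{thm:geometric-input}. On $\Gamma_1,\Gamma_4$ we have $b=2s\le2\tau$. On $\Gamma_2,\Gamma_3$ we have $b=1+3u\le4$. The isolated points give $|b|\in\{0,1,\tau\}$. So the maximum is $4$, attained at $u=1$.

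Sharpness is witnessed as follows. The polynomial $p_1=x^2$ has norm one and $a=1$. The polynomial $p_0=y^2$ has norm one and $c=1$. The polynomial $P_*=-x^2+4xy-y^2$ has $\normO{P_*}=1$ by \cref{thm:geometric-input} and $|b|/4=1$.

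For the box statement, \eqref{eq:coefficient-bound} says exactly that $\BO\subset[-1,1]\times[-4,4]\times[-1,1]$. Since $\BO=-\BO$, the points $\pm p_1,\pm p_0,\pm P_*\in\BO$ touch all six faces of this box. Therefore no smaller axis-parallel box contains $\BO$.

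No step is a genuine obstacle. The only point needing care is the bound on $|b|$, and \cref{thm:polarization} dispatches it at once.
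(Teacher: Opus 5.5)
Your proof is correct and follows the paper's argument. The paper also bounds $|a|,|c|$ by evaluating at $(1,0)$ and $(0,1)$, bounds $|b|$ via $|b|/2=|\check P(e_1,e_2)|\le\bilnorm{\check P}\le2\normO{P}$ using \cref{thm:polarization}, and uses $x^2$, $y^2$, $P_*$ as the sharpness witnesses. Your supplementary Krein--Milman check that $\max_{P\in\ext(\BO)}|b|=4$ is also valid, and it is the more direct route, since it avoids the vertex-by-vertex case analysis behind the polarization theorem.
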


\begin{proof}
Evaluation at \((1,0)\) and \((0,1)\) gives
\(|a|,|c|\le\normO{P}\).  Since \(e_1,e_2\in\OO\),
\[
 \frac{|b|}{2}=|\check P(e_1,e_2)|
 \le\bilnorm{\check P}
 \le2\normO{P}
\]
by \cref{thm:polarization}.  The first two bounds are attained by \(x^2\) and \(y^2\),
and the third by \(P_*\).
\end{proof}

\begin{corollary}[Sharp persistence of Martin's quadratic factor]\label{cor:Martin}
For every quadratic form,
\begin{equation}\label{eq:Martin-sector}
 \bilnorm{\check P}\le2\normO{P},
\end{equation}
and the constant \(2\) cannot be improved.
\end{corollary}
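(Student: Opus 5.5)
This is a restatement of \cref{thm:polarization} in inequality form, so the plan is simply to unwind the definition of \(\mathbf c_2(\OO)\) in \cref{def:polarization}.

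\emph{The inequality.} For \(P=0\) both sides of \eqref{eq:Martin-sector} vanish, since \(\check P=0\). For \(P\ne0\), the definition of \(\mathbf c_2(\OO)\) as a supremum of ratios gives
\[
 \frac{\bilnorm{\check P}}{\normO{P}}\le\mathbf c_2(\OO)=2 .
\]
Here the value \(2\) comes from \eqref{eq:polarization-value}. Multiplying through by \(\normO{P}>0\) yields \eqref{eq:Martin-sector}. One check is needed: \(\normO{\cdot}\) must be a genuine norm on \(\Ptwo\), so that \(P\ne0\) forces \(\normO{P}>0\). This holds because \(\OO\) has nonempty interior, and a nonzero polynomial cannot vanish identically on an open set.

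\emph{Sharpness.} Take \(P_*=-x^2+4xy-y^2\). By \cref{thm:geometric-input} it lies on \(\Gamma_2\), so \(\normO{P_*}=1\). By \cref{lem:bilinear-norm}, the term \(|b|/2=2\) gives \(\bilnorm{\check P_*}\ge2\). This is the lower bound already recorded in the proof of \cref{thm:polarization}, and together with \eqref{eq:Martin-sector} it gives \(\bilnorm{\check P_*}=2=2\normO{P_*}\). Hence no constant smaller than \(2\) can work.

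There is no real obstacle: all the substantive work, namely the reduction to extreme points and the case checks along \(\Gamma_1,\dots,\Gamma_4\), is contained in \cref{thm:polarization}. The only point worth remarking is that \(\OO\) is not balanced, so Martin's theorem cannot be invoked directly. The corollary instead says that Martin's quadratic factor \(2\) survives, and is sharp, in this relative setting.
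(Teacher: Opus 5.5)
Your proposal is correct and matches the paper's proof: the corollary is just the inequality and optimality content of \cref{thm:polarization}, with $P_*$ giving sharpness. Your extra check that $\normO{P}>0$ for $P\ne0$ is a harmless detail that the paper leaves implicit.
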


\begin{proof}
This is exactly the inequality and optimality statement of
\cref{thm:polarization}.
\end{proof}

Although \(\OO\) is not a Banach-space unit ball, \cref{cor:Martin} shows that
Martin's numerical degree-two factor persists sharply on this nonsymmetric sector.
This is a genuine geometric fact, not a formal consequence of
\cref{thm:Martin-classical}: on the simplex the corresponding relative constant is
$3$, whereas on the square it is $3/2$; see \cref{tab:comparison}.

\section{Coefficient and unconditional inequalities}\label{sec:coefficients}

\subsection[Exact coefficient norms]{Exact coefficient \texorpdfstring{$\ell_q$}{lq}-norm constants}

Let
\[
 P(x,y)=ax^2+bxy+cy^2.
\]
For $1\le q<\infty$, define
\begin{equation}\label{eq:coefficient-q}
 |P|_q:=\bigl(|a|^q+|b|^q+|c|^q\bigr)^{1/q},
\end{equation}
and put $|P|_\infty:=\max\{|a|,|b|,|c|\}$.  These are norms on the
coefficient space $\mathbb R^3$; they are different from the polynomial supremum
norm $\normO{P}$.

\begin{definition}\label{def:Cq}
For $1\le q\le\infty$, the coefficient constant of $\OO$ at exponent $q$ is
\begin{equation}\label{eq:Cq-def}
 \mathcal C_q(\OO):=\sup_{P\ne0}\frac{|P|_q}{\normO{P}}.
\end{equation}
\end{definition}

Because $\Ptwo$ is finite-dimensional, $\mathcal C_q(\OO)<\infty$ for every
$q$.  The point is to determine the exact value, including its extremizers.

\begin{theorem}[Exact coefficient-norm constants]\label{thm:coefficient-q}
For every $1\le q<\infty$,
\begin{equation}\label{eq:Cq-value}
 \displaystyle
 \mathcal C_q(\OO)=(2+4^q)^{1/q},
\end{equation}
and
\begin{equation}\label{eq:Cinf-value}
\mathcal C_\infty(\OO)=4.
\end{equation}
Every constant is attained by
\[
 P_*(x,y)=-x^2+4xy-y^2
\]
and by its negative.
\end{theorem}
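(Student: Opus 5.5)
The lower bound is immediate. $P_*=\gamma_2(1)$ is an extreme point of $\BO$, so $\normO{P_*}=1$. Its coefficient vector is $(-1,4,-1)$, hence $|P_*|_q=(2+4^q)^{1/q}$ and $|P_*|_\infty=4$. The same holds for $-P_*$.

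For the upper bound, the map $P\mapsto|P|_q$ is a norm on the coefficient space, hence continuous and convex. By \cref{prop:KM} and \eqref{eq:KM-reduction}, $\mathcal C_q(\OO)=\max_{P\in\ext(\BO)}|P|_q$. Negation preserves $|\cdot|_q$, so by \cref{thm:geometric-input} it suffices to check the four isolated points $p_0,\dots,p_3$ and the four curves $\Gamma_1,\dots,\Gamma_4$.

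The case $q=\infty$ is \cref{cor:coefficient-box}: every extreme point has $|a|,|c|\le1$ and $|b|\le4$. For finite $q$ I will not use a uniform abstract bound, but instead compare each extreme point coordinatewise with $(1,4,1)$.

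\begin{itemize}
\item The isolated points $p_0,\dots,p_3$ have all coordinates of modulus at most $1$. Hence $|p_j|_q^q\le3\le2+4^q$.
\item On $\Gamma_1$ (and, by the symmetry $a\leftrightarrow c$, on $\Gamma_4$) we have $|a|=1$, $|b|=2s\le2\tau<4$ and $0\le c=1-s^2\le1$. So $|\gamma_1(s)|_q^q\le1+(2\tau)^q+1<2+4^q$.
\item On $\Gamma_2$ (and symmetrically $\Gamma_3$) we have $|a|=1$ and $b=1+3u\le4$, with equality only at $u=1$. For the third coordinate, $|c|=|1-3u^2|/2$ on $[\tau,1]$. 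Since $3\tau^2<1$, the quantity $1-3u^2$ decreases from a positive value at $u=\tau$ to $-2$ at $u=1$, so $|1-3u^2|\le2$ and $|c|\le1$.
\end{itemize}

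Thus every coordinate of $\gamma_2(u)$ is dominated in modulus by the corresponding coordinate of $(1,4,1)$. Monotonicity of $t\mapsto t^q$ then gives $|\gamma_2(u)|_q\le(2+4^q)^{1/q}$, with equality at $u=1$, that is, at $P_*$.

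The main step needing care is the bound $|c|\le1$ along $\Gamma_2$. It comes down to the sign analysis of $1-3u^2$ on $[\tau,1]$ and is elementary. Since every extreme point is coordinatewise dominated in absolute value by $(1,4,1)$, the argument works uniformly for all $1\le q<\infty$. No interpolation between exponents is needed.
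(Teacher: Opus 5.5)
Your proof is correct, but it reaches the coordinate bounds by a different path than the paper.

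\textbf{The paper's route.} The proof is two lines. \Cref{cor:coefficient-box} already gives $|a|\le1$, $|b|\le4$, $|c|\le1$ for \emph{every} $P\in\BO$, not only for extreme points. Monotonicity of $t\mapsto t^q$ then bounds $|P|_q$ directly, with no further appeal to \cref{prop:KM}.

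\textbf{Your route.} You apply the Krein--Milman reduction to $|\cdot|_q$ itself and check coordinatewise domination by $(1,4,1)$ family by family. Your checks are all right:
\begin{itemize}
\item on $\Gamma_1$, $|b|=2s\le 2\tau$ and $c=1-s^2\in[0,1]$;
\item on $\Gamma_2$, $b=1+3u\le4$ and $-1\le(1-3u^2)/2<1$, using $3\tau^2=3-6\tau<1$;
\item the isolated points have all coordinates of modulus at most $1$.
\end{itemize}
Since $|a|$, $|b|$, $|c|$ are convex functions of $P$, this inspection is in effect a direct proof of \cref{cor:coefficient-box} from the extreme-set parametrization.

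\textbf{Comparison.} Your route buys independence from the polarization theorem. The paper gets $|b|\le4$ only through
\[
|b|/2=|\check P(e_1,e_2)|\le\bilnorm{\check P}\le2\normO{P},
\]
which rests on the vertex analysis of \cref{lem:bilinear-norm,thm:polarization}. You read $|b|\le4$ straight off $b=1+3u$. The paper's route buys brevity, given what is already proved.

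\textbf{One inconsistency.} You already cite \cref{cor:coefficient-box} for $q=\infty$. That corollary holds on all of $\BO$, so once it is cited the finite-$q$ case follows at once by the same monotonicity, and your family-by-family check becomes redundant. If you want the polarization-free argument, derive the $q=\infty$ case from your coordinatewise check as well, rather than from the corollary.
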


\begin{proof}
By homogeneity, suppose $\normO{P}=1$.  The sharp coordinate bounds in
\cref{cor:coefficient-box} give
\[
 |a|\le1,\qquad |b|\le4,\qquad |c|\le1.
\]
Consequently,
\[
 |P|_q\le(1+4^q+1)^{1/q}=(2+4^q)^{1/q}
 \quad(1\le q<\infty),
\]
and $|P|_\infty\le4$.  Since $P_*$ has norm one and coefficient vector
$(-1,4,-1)$, equality holds simultaneously for every $q$.
\end{proof}

\subsection[The exponent 4/3 and Bohnenblust--Hille theory]{The exponent \texorpdfstring{$4/3$}{4/3} and Bohnenblust--Hille theory}

We recall both classical formulations in order to make the scope of the next
corollary unambiguous.  Let $\mathbb K\in\{\mathbb R,\mathbb C\}$.  The
\emph{multilinear Bohnenblust--Hille constant} in degree $m$ is the least number
$B^{\mathrm{mult}}_{\mathbb K,m}$ such that
\begin{equation}\label{eq:multilinear-BH}
 \left(
  \sum_{i_1,\ldots,i_m=1}^{N}
  |T(e_{i_1},\ldots,e_{i_m})|^{\frac{2m}{m+1}}
 \right)^{\frac{m+1}{2m}}
 \le B^{\mathrm{mult}}_{\mathbb K,m}\,\|T\|
\end{equation}
holds for every positive integer $N$ and every $m$-linear form
$T:(\ell_\infty^N)^m\to\mathbb K$.  The crucial feature is that the same constant
works for all dimensions $N$; equivalently, the estimate extends to continuous
$m$-linear forms on $c_0$.  When $m=2$, \eqref{eq:multilinear-BH} is
Littlewood's $4/3$ inequality, and in the real case its optimal constant is
$\sqrt2$ \cites{DinizEtAl,Littlewood}.

The \emph{polynomial Bohnenblust--Hille constant} in degree $m$ is defined
analogously.  It is the least number $B^{\mathrm{pol}}_{\mathbb K,m}$ such that,
for every $N$ and every polynomial
\[
 Q(z)=\sum_{|\alpha|=m}a_\alpha z^\alpha
 \quad(z\in\mathbb K^N),
\]
one has
\begin{equation}\label{eq:classical-BH}
 \left(
  \sum_{|\alpha|=m}|a_\alpha|^{\frac{2m}{m+1}}
 \right)^{\frac{m+1}{2m}}
 \le B^{\mathrm{pol}}_{\mathbb K,m}
 \sup_{\|z\|_\infty\le1}|Q(z)|.
\end{equation}
Again the constant is uniform in $N$---equivalently, it controls homogeneous
polynomials on $c_0$---and the exponent $2m/(m+1)$ is optimal in this
dimension-free sense \cites{BohnenblustHille,DefantEtAl}.  Constants obtained
after fixing $N$ are useful finite-dimensional variants, but they are not the same
objects as the dimension-free constants.

The next consequence uses the quadratic exponent $4/3$, but the norm is taken on
the fixed body $\OO\subset\mathbb R^2$, not on $B_{\ell_\infty^N}$ uniformly in
$N$.  We therefore call it a Bohnenblust--Hille-\emph{type} inequality, not a
Bohnenblust--Hille inequality.

\begin{corollary}[Sharp $4/3$-coefficient inequality]\label{cor:BH-type}
For every quadratic form $P(x,y)=ax^2+bxy+cy^2$,
\begin{equation}\label{eq:BH-type-sector}
\displaystyle
 \bigl(|a|^{4/3}+|b|^{4/3}+|c|^{4/3}\bigr)^{3/4}
 \le \bigl(2+2^{8/3}\bigr)^{3/4}\normO{P}.
\end{equation}
The factor is optimal and equals $\mathcal C_{4/3}(\OO)$.  Equality is attained by
$P_*$ and $-P_*$.
\end{corollary}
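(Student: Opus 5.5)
This corollary is the special case $q=4/3$ of \cref{thm:coefficient-q}, so the plan is to specialize that result and check the arithmetic of the constant. By homogeneity we may assume $\normO{P}=1$ (the case $P=0$ is trivial). \Cref{cor:coefficient-box} then gives the coordinate bounds $|a|\le1$, $|c|\le1$ and $|b|\le4$.

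Raising these bounds to the power $4/3$ and summing gives
\[
 |a|^{4/3}+|b|^{4/3}+|c|^{4/3}\le 2+4^{4/3}.
\]
Since $4^{4/3}=2^{8/3}$, taking the $3/4$ power yields \eqref{eq:BH-type-sector}. The factor $(2+2^{8/3})^{3/4}$ is therefore exactly $(2+4^q)^{1/q}$ at $q=4/3$, which equals $\mathcal C_{4/3}(\OO)$ by \eqref{eq:Cq-value}.

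For optimality and equality, we use $P_*=-x^2+4xy-y^2$. It has norm one, being the point $\gamma_2(1)$ of the extreme set in \cref{thm:geometric-input}, and its coefficient vector is $(-1,4,-1)$. Hence
\[
 |P_*|_{4/3}=(1+4^{4/3}+1)^{3/4}=(2+2^{8/3})^{3/4},
\]
so equality holds for $P_*$, and for $-P_*$ since the coefficient moduli are unchanged.

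There is no real obstacle here: all the substance lies in the sharp bound $|b|\le4\normO{P}$. That bound is inherited from the polarization constant $\mathbf c_2(\OO)=2$ of \cref{thm:polarization}, via \cref{cor:coefficient-box}. The only point requiring care is that a single polynomial, $P_*$, attains all three coordinate bounds simultaneously. This is what makes the crude bound via the coordinate box sharp for every $q$, including $q=4/3$.
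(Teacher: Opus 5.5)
Your proof is correct and follows the paper's own route. The paper proves the corollary by applying \cref{thm:coefficient-q} at $q=4/3$ together with $4^{4/3}=2^{8/3}$; you have simply written out that theorem's proof inline, namely the coordinate bounds from \cref{cor:coefficient-box} and equality at $P_*$.
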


\begin{proof}
Apply \cref{thm:coefficient-q} with $q=4/3$ and use
$4^{4/3}=2^{8/3}$.
\end{proof}

\begin{remark}[Why $\mathcal C_{4/3}(\OO)$ is not a Bohnenblust--Hille constant]
\label{rem:not-BH}
There are three separate reasons.  First, the number of variables is fixed at two,
whereas \eqref{eq:multilinear-BH} and \eqref{eq:classical-BH} require one constant
uniformly for all $N$.  Second, $\OO$ is not the unit ball of $\ell_\infty^2$ (or of
any normed space, since it is unbalanced).  Third, no critical-exponent phenomenon
is involved here: \cref{thm:coefficient-q} computes the exact constant for every
$1\le q\le\infty$.  The phrase ``of Bohnenblust--Hille type'' refers only to the
appearance of the classical quadratic exponent $4/3$.

For comparison, the fixed two-variable real polynomial constant associated with the
classical $\ell_\infty^2$ norm at exponent $4/3$ is approximately $1.837373$
\cite{JMMS}, while the factor in \eqref{eq:BH-type-sector} is approximately
$4.911899$.  The difference reflects the strong cancellation allowed by the
first-quadrant norm: the norm-one polynomial $P_*$ has coefficient vector
$(-1,4,-1)$.
\end{remark}

\subsection{The unconditional constant of the canonical monomial basis}

Let
\[
 \mathcal B_{\mathrm{can}}=(m_1,m_2,m_3):=(x^2,xy,y^2)
\]
be the canonical ordered basis of $\Ptwo$.  Its unconditional constant is the least
$C\ge1$ such that
\begin{equation}\label{eq:unconditional-definition}
 \left\|\sum_{j\in A}\varepsilon_j a_j m_j\right\|_{\OO}
 \le C\left\|\sum_{j=1}^3a_jm_j\right\|_{\OO}
\end{equation}
for every coefficient vector $(a_1,a_2,a_3)\in\mathbb R^3$, every subset
$A\subset\{1,2,3\}$, and every choice of signs
$\varepsilon_j\in\{-1,1\}$.  We denote it by $\Unc$.

For $P=ax^2+bxy+cy^2$, define its coefficient modulus polynomial by
\begin{equation}\label{eq:modulus-polynomial}
 P^\#(x,y):=|a|x^2+|b|xy+|c|y^2.
\end{equation}
Because $\OO$ lies in the positive quadrant, the standard modulus reduction is
particularly transparent.

\begin{lemma}[Modulus characterization]\label{lem:unconditional-modulus}
The canonical unconditional constant satisfies
\begin{equation}\label{eq:unconditional-modulus}
 \Unc=\sup_{P\ne0}\frac{\normO{P^\#}}{\normO{P}}.
\end{equation}
\end{lemma}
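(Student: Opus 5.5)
We prove the two inequalities between $\Unc$ and the supremum $S:=\sup_{P\ne0}\normO{P^\#}/\normO{P}$ separately. The only geometric fact used is that every point $(x,y)\in\OO$ has $x,y\ge0$, so each monomial $x^2,xy,y^2$ is nonnegative on $\OO$.

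\emph{Lower bound $\Unc\ge S$.} Let $P=ax^2+bxy+cy^2$. Take $A=\{1,2,3\}$ and choose $\varepsilon_j\in\{-1,1\}$ with $\varepsilon_1a=|a|$, $\varepsilon_2b=|b|$, $\varepsilon_3c=|c|$; when a coefficient vanishes, either sign works. Then $\sum_{j\in A}\varepsilon_ja_jm_j=P^\#$. The defining inequality \eqref{eq:unconditional-definition} gives $\normO{P^\#}\le\Unc\normO{P}$ for every $P$, and hence $S\le\Unc$.

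\emph{Upper bound $\Unc\le S$.} Fix $A$, signs $\varepsilon_j$, and a coefficient vector, and put $Q:=\sum_{j\in A}\varepsilon_ja_jm_j$. For $(x,y)\in\OO$ the monomials are nonnegative, so the triangle inequality gives
\[
 |Q(x,y)|\le\sum_{j\in A}|a_j|m_j(x,y)\le\sum_{j=1}^3|a_j|m_j(x,y)=P^\#(x,y).
\]
Taking the supremum over $\OO$ yields $\normO{Q}\le\normO{P^\#}\le S\normO{P}$. Since this holds for every $A$, every choice of signs, and every coefficient vector, the least admissible constant satisfies $\Unc\le S$.

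It remains to check that $\Unc$ is well defined, i.e.\ that it is at least $1$. This holds because $S\ge1$: take $P=x^2$, for which $P^\#=P$.

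There is no real obstacle in this argument. The only point needing care is that dropping terms cannot increase the pointwise value once absolute values are taken, and this is exactly where the positivity of the monomials on $\OO$ is used.
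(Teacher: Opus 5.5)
Your proof is correct and is essentially the paper's argument. You get $\Unc\le S$ from the pointwise bound $\bigl|\sum_{j\in A}\varepsilon_j a_j m_j\bigr|\le P^\#$, which holds because the monomials are nonnegative on $\OO$, and you get the reverse inequality by choosing $A=\{1,2,3\}$ with $\varepsilon_j a_j=|a_j|$, which turns the sign-changed polynomial into $P^\#$. Your extra check that $S\ge1$, so that the constraint $C\ge1$ in the definition is harmless, is a small detail the paper leaves implicit.
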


\begin{proof}
For $(x,y)\in\OO$, all three monomials $x^2,xy,y^2$ are nonnegative.  Therefore,
for every subset $A$ and every choice of signs,
\[
 \left|\sum_{j\in A}\varepsilon_j a_j m_j(x,y)\right|
 \le\sum_{j=1}^3|a_j|m_j(x,y)=P^\#(x,y).
\]
Taking suprema gives the upper bound in \eqref{eq:unconditional-definition} with
$C$ equal to the right-hand side of \eqref{eq:unconditional-modulus}.  Conversely,
choose $A=\{1,2,3\}$ and choose $\varepsilon_j$ so that
$\varepsilon_j a_j=|a_j|$.  The sign-changed polynomial is then exactly $P^\#$,
which proves equality.  This is the finite-dimensional modulus formulation of the
canonical unconditional constant used in \cite{GMSU}.
\end{proof}

\begin{theorem}[Exact unconditional constant]\label{thm:unconditional}
The canonical monomial basis of $(\Ptwo,\normO{\cdot})$ has unconditional constant
\begin{equation}\label{eq:unconditional-value}
\Unc=3.
\end{equation}
The constant is attained by $P_*=-x^2+4xy-y^2$.
\end{theorem}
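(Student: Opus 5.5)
The plan is to use \cref{lem:unconditional-modulus} to reduce the problem to estimating $\normO{P^\#}$ against $\normO{P}$. The coordinate box from \cref{cor:coefficient-box} should then give the upper bound pointwise, with no further use of the extreme set. The lower bound will come from evaluating $P_*^\#$ at a single point.

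\emph{Upper bound.} By homogeneity assume $\normO{P}=1$. \Cref{cor:coefficient-box} gives $|a|\le1$, $|b|\le4$, $|c|\le1$. On $\OO$ the monomials $x^2,xy,y^2$ are nonnegative, so for every $(x,y)\in\OO$
\[
 P^\#(x,y)=|a|x^2+|b|xy+|c|y^2\le x^2+4xy+y^2=(x+y)^2+2xy .
\]
Since $x+y\le\sqrt2$ on $\OO$ and $xy\le (x+y)^2/4\le 1/2$, the right-hand side is at most $2+1=3$. Hence $\normO{P^\#}\le3$, and by \eqref{eq:unconditional-modulus} we get $\Unc\le3$.

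\emph{Sharpness.} The polynomial $P_*$ has norm one by \cref{thm:geometric-input} and \eqref{eq:Pstar-geometric}. Its modulus is $P_*^\#=x^2+4xy+y^2$. At the point $(1/\sqrt2,1/\sqrt2)\in L_2\subset\OO$ we have $P_*^\#=\tfrac12+2+\tfrac12=3$. Therefore $\normO{P_*^\#}\ge 3=3\normO{P_*}$, which together with the upper bound gives $\Unc=3$, attained by $P_*$.

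There is no serious obstacle. The one point needing care is the monotonicity step $P^\#\le x^2+4xy+y^2$, which is valid only because $\OO$ lies in the closed first quadrant, so that each monomial is nonnegative there. If the coordinate box were unavailable, the fallback would be the convexity of $P\mapsto\normO{P^\#}$, which is a supremum of the convex functions $|a|x^2+|b|xy+|c|y^2$. This would allow \eqref{eq:KM-reduction} and a check along the curves $\Gamma_j$ and the isolated points. The box argument makes that check unnecessary.
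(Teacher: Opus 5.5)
Your proposal is correct and follows the paper's proof essentially step for step. The upper bound uses \cref{lem:unconditional-modulus} together with the coordinate box of \cref{cor:coefficient-box}, giving $P^\#\le x^2+4xy+y^2=(x+y)^2+2xy\le3$ on $\OO$, and sharpness comes from evaluating $P_*^\#$ at $(1/\sqrt2,1/\sqrt2)$.
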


\begin{proof}
Normalize $\normO{P}=1$.  By \cref{cor:coefficient-box},
$|a|\le1$, $|b|\le4$, and $|c|\le1$.  Hence, pointwise on $\OO$,
\[
 P^\#(x,y)\le x^2+4xy+y^2=(x+y)^2+2xy.
\]
Since $x+y\le\sqrt2$ and $xy\le(x+y)^2/4$,
\begin{equation}\label{eq:modulus-upper}
 P^\#(x,y)
 \le\frac32(x+y)^2\le3.
\end{equation}
Thus \cref{lem:unconditional-modulus} gives $\Unc\le3$.

For $P_*$, the modulus polynomial is
\[
 P_*^\#(x,y)=x^2+4xy+y^2.
\]
At $(x,y)=(1/\sqrt2,1/\sqrt2)\in\OO$ it takes the value $3$; hence
$\normO{P_*^\#}=3$.  Since $\normO{P_*}=1$, equality follows.  Equivalently, the
norm-one polynomial $x^2-4xy+y^2=-P_*$ becomes a norm-three polynomial when only
the sign of its mixed term is changed.
\end{proof}

\begin{remark}\label{rem:C1-versus-unconditional}
The coefficient $\ell_1$ constant and the unconditional constant are not the same:
\[
 \mathcal C_1(\OO)=6,
 \qquad
 \Unc=3.
\]
The first quantity is the largest coefficient sum
$|a|+|b|+|c|$ on $\BO$; the second is the largest \emph{polynomial norm}
$\normO{P^\#}$.  The distinction is essential on $\OO$, because the point
$(1,1)$, at which a positive quadratic would realize its coefficient sum, does not
belong to the sector.
\end{remark}

The unconditional constant also gives a body-relative Bohr-radius statement.
For $r>0$, write $r\OO:=\{rz:z\in\OO\}$ and, with
$\|Q\|_{r\OO}:=\sup_{z\in r\OO}|Q(z)|$, define
\begin{equation}\label{eq:bohr-radius-definition}
 R_{\mathrm{can}}(\OO):=
 \sup\{r>0:\|P^\#\|_{r\OO}\le1\text{ whenever }\|P\|_{\OO}\le1\}.
\end{equation}

\begin{corollary}[Canonical body-relative Bohr radius]\label{cor:bohr-radius}
\begin{equation}\label{eq:bohr-radius-value}
R_{\mathrm{can}}(\OO)=\frac1{\sqrt3}.
\end{equation}
\end{corollary}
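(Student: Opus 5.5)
The plan is to reduce the Bohr-radius condition to the unconditional constant by homogeneity. For every quadratic form $Q$ and every $r>0$,
\[
 \|Q\|_{r\OO}=\sup_{z\in\OO}|Q(rz)|=r^2\normO{Q}.
\]
The condition $\|P^\#\|_{r\OO}\le1$ for all $P$ with $\normO{P}\le1$ therefore reads $r^2\normO{P^\#}\le1$ for all such $P$. Equivalently,
\[
 r^2\sup_{\normO{P}\le1}\normO{P^\#}\le1 .
\]
By \cref{lem:unconditional-modulus} and homogeneity of $P\mapsto P^\#$ (note $(tP)^\#=|t|P^\#$), this supremum equals $\Unc$. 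Hence the admissible set in \eqref{eq:bohr-radius-definition} is exactly $\{r>0: r^2\Unc\le1\}$.

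Next I invoke \cref{thm:unconditional}, which gives $\Unc=3$. The admissible set is then $(0,1/\sqrt3]$, and its supremum is $1/\sqrt3$. This value is attained as a maximum.

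For sharpness I will exhibit the extremizer explicitly. Take $P_*$, which has norm one, and $r>1/\sqrt3$. Then
\[
 \|P_*^\#\|_{r\OO}=3r^2>1 .
\]
This is witnessed at the point $r(1/\sqrt2,1/\sqrt2)$, so no larger $r$ is admissible.

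There is no real obstacle. The only point needing care is that the supremum over $\BO$ in the reformulation coincides with the supremum of ratios in \eqref{eq:unconditional-modulus}, which uses homogeneity of both norms and excludes $P=0$.
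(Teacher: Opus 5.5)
Your proposal is correct and follows essentially the same route as the paper: quadratic homogeneity gives $\|P^\#\|_{r\OO}=r^2\normO{P^\#}$, \cref{thm:unconditional} then makes $r=1/\sqrt3$ admissible, and $P_*$ rules out any larger radius. You spell out the reduction to $\Unc$ through the homogeneity $(tP)^\#=|t|P^\#$ and give the explicit witness $r(1/\sqrt2,1/\sqrt2)$; the paper leaves both steps implicit.
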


\begin{proof}
Quadratic homogeneity gives
$\|P^\#\|_{r\OO}=r^2\|P^\#\|_{\OO}$.  By
\cref{thm:unconditional}, this is at most $3r^2\|P\|_{\OO}$, so
$r=1/\sqrt3$ is admissible.  The polynomial $P_*$ shows that no larger radius is
possible.
\end{proof}

\begin{corollary}[A universal extremizer for the global inequalities]
\label{cor:common-extremizer}
The global constants satisfy
\begin{equation}\label{eq:relation}
\Mark=2\sqrt{1+\mathbf c_2(\OO)^2}=2\sqrt5.
\end{equation}
Moreover, $P_*=-x^2+4xy-y^2$ is simultaneously extremal for the Markov
inequality, the relative polarization inequality, every coefficient-norm inequality in
\cref{thm:coefficient-q}, the $4/3$-coefficient inequality in
\cref{cor:BH-type}, the canonical unconditional constant, and the radius in
\cref{cor:bohr-radius}.
\end{corollary}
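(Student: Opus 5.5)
This corollary is an assembly of results already proved, so the plan is to gather them and check two numerical identities. For \eqref{eq:relation} I take \(\Mark=2\sqrt5\) from \cref{thm:Markov} and \(\mathbf c_2(\OO)=2\) from \cref{thm:polarization}. Then \(2\sqrt{1+\mathbf c_2(\OO)^2}=2\sqrt{1+4}=2\sqrt5\), which settles the identity at the level of numbers.

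To make the relation less coincidental, I would also record the structural reason. At \((1,0)\) one has \(\nabla P(1,0)=(2a,b)\). Hence
\[
 \|\nabla P(1,0)\|_2^2=4a^2+b^2=4\bigl(a^2+(b/2)^2\bigr).
\]
By \cref{cor:coefficient-box}, \(|a|\le1\), and by \cref{thm:polarization}, \(|b|/2\le\mathbf c_2(\OO)\). Both bounds are attained by the same polynomial \(P_*\). Therefore the axial value of \(\Bern\) is exactly \(2\sqrt{1+\mathbf c_2(\OO)^2}\), and \cref{thm:Markov} shows that this axial value is the global maximum.

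For the extremality list, I cite each earlier statement in turn:
\begin{itemize}
\item Markov: \cref{thm:Markov}, with equality at \((1,0)\) and \((0,1)\).
\item Polarization: \cref{thm:polarization}, through the term \(|b|/2=2\).
\item All coefficient norms: \cref{thm:coefficient-q}, since the coefficient vector is \((-1,4,-1)\).
\item The \(4/3\) inequality: \cref{cor:BH-type}.
\item The unconditional constant: \cref{thm:unconditional}, where \(P_*^\#\) reaches \(3\) at \((1/\sqrt2,1/\sqrt2)\).
\item The Bohr radius: \cref{cor:bohr-radius}, where \(P_*\) blocks every \(r>1/\sqrt3\).
\end{itemize}
In every case \(\normO{P_*}=1\), because \(P_*=\gamma_2(1)\) lies in \(\ext(\BO)\) by \cref{thm:geometric-input}.

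No step is a genuine obstacle. The only point needing care is reading \eqref{eq:relation} correctly: it is a numerical identity between two independently computed constants, and the axial-gradient computation above explains why the two constants share the extremizer \(P_*\).
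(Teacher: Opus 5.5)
Your proposal is correct and follows the paper's proof. The identity is read off from \cref{thm:Markov} and \cref{thm:polarization}, the extremality list is assembled from the earlier equality statements, and the radius follows from quadratic homogeneity. Your extra axial computation $\|\nabla P(1,0)\|_2^2=4\bigl(a^2+(b/2)^2\bigr)$ is also valid: it uses $|a|\le\normO{P}$ and $|b|/2=|\check P(e_1,e_2)|\le\mathbf c_2(\OO)\normO{P}$, with both bounds saturated by $P_*$, and so explains why the relation is not a numerical coincidence, although the paper does not need it.
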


\begin{proof}
The identity follows from \cref{thm:Markov,thm:polarization}.  The equality
statements in \cref{thm:Markov,thm:polarization,thm:coefficient-q,thm:unconditional}
identify the same polynomial $P_*$; the radius statement then follows from
quadratic homogeneity.
\end{proof}

\subsection{Comparison with related exact constants}

For context, \cref{tab:comparison} places the octagonal result between the two
endpoint bodies
\[
 \Delta\subset\OO\subset\square,
 \qquad
 \Delta:=\{(x,y)\in[0,1]^2:x+y\le1\},
 \quad \square:=[0,1]^2.
\]
For a body $K$ in this list, $C_{xy}(K)$ is the best factor in
$|b|\le C_{xy}(K)\|P\|_K$, and $R_{\mathrm{can}}(K)$ is the canonical
body-relative Bohr radius.  Quadratic homogeneity gives
$R_{\mathrm{can}}(K)=\mathbf u_{\mathrm{can}}(K)^{-1/2}$.  The endpoint Markov,
polarization, mixed-coefficient, and unconditional constants are taken from
\cites{GMSS,GMSU,MRS}.  The extremizers
\[
 x^2-6xy+y^2,\qquad -x^2+4xy-y^2,\qquad x^2-3xy+y^2
\]
for the simplex, octagonal sector, and square, respectively, simultaneously attain
the two pure-coefficient bounds and the sharp mixed-coefficient bound.  Hence, if
$m_K=C_{xy}(K)$, then
\begin{equation}\label{eq:endpoint-Cq-family}
 \mathcal C_q(K)=(2+m_K^q)^{1/q}\quad(1\le q<\infty),
 \qquad \mathcal C_\infty(K)=m_K.
\end{equation}
Thus the second panel of the table records the complete coefficient scale at four
representative exponents, not merely the value at $4/3$.

\begin{table}[htbp]
\centering
\caption{Sharp constants for three nested unbalanced planar bodies.  Panel (a)
contains global, polarization, and canonical-basis quantities; panel (b) records
selected values from the exact coefficient scale in
\eqref{eq:endpoint-Cq-family}.}
\label{tab:comparison}
\footnotesize
\begingroup
\renewcommand{\arraystretch}{1.15}
\setlength{\tabcolsep}{3.7pt}
\begin{tabular}{@{}lccccc@{}}
\toprule
\multicolumn{6}{c}{\textit{(a) Global, polarization, and basis constants}}\\
\cmidrule(lr){1-6}
Body $K$ & $\mathfrak M_K$ & $\mathbf c_2(K)$
& $\mathbf u_{\mathrm{can}}(K)$ & $R_{\mathrm{can}}(K)$
& $C_{xy}(K)$\\
\midrule
Simplex
& $2\sqrt{10}$ & $3$ & $2$ & $1/\sqrt2$ & $6$\\
Octagonal sector
& $2\sqrt5$ & $2$ & $3$ & $1/\sqrt3$ & $4$\\
Square
& $\sqrt{13}$ & $3/2$ & $5$ & $1/\sqrt5$ & $3$\\
\bottomrule
\end{tabular}

\medskip

\begin{tabular}{@{}lcccc@{}}
\toprule
\multicolumn{5}{c}{\textit{(b) Exact coefficient-norm constants}}\\
\cmidrule(lr){1-5}
Body $K$ & $\mathcal C_1(K)$ & $\mathcal C_{4/3}(K)$
& $\mathcal C_2(K)$ & $\mathcal C_\infty(K)$\\
\midrule
Simplex
& $8$ & $\bigl(2+6^{4/3}\bigr)^{3/4}$ & $\sqrt{38}$ & $6$\\
Octagonal sector
& $6$ & $\bigl(2+4^{4/3}\bigr)^{3/4}$ & $3\sqrt2$ & $4$\\
Square
& $5$ & $\bigl(2+3^{4/3}\bigr)^{3/4}$ & $\sqrt{11}$ & $3$\\
\bottomrule
\end{tabular}
\endgroup
\end{table}

The canonical unconditional constant depends on the polynomial norm together with
the chosen coordinate monomials; even an isometry of the underlying plane need not
preserve this coordinate structure.  For general $1<p<\infty$, $p\ne2$, the
canonical unconditional constant of quadratic forms on $B_{\ell_p^2}$ is described
in \cite{GMSU} as the maximum of an explicit one-variable function.  Several
special values simplify to closed forms.  Together with the endpoint cases and
Kim's octagonal formula, they give the broader comparison in
\cref{tab:unconditional-balanced}.

For the full octagonal family, put
\[
 \mathcal O_w:=\{(x,y)\in\mathbb R^2:|x|\le1,\ |y|\le1,
 |x|+|y|\le1+w\},\qquad 0<w\le\tau.
\]
Kim proved \cite{KimOctUnc} that
\begin{equation}\label{eq:Kim-unconditional-family}
 \mathbf u_{\mathrm{can}}(\mathcal O_w)
 =\frac{1+w^2+\sqrt{2(1+w^4)}}{(1+w)^2}.
\end{equation}
Since $\widetilde{\OO}=\mathcal O_\tau$, substitution of
$\tau=\sqrt2-1$ gives
\[
 \mathbf u_{\mathrm{can}}(\widetilde{\OO})
 =2-\sqrt2+\sqrt3(\sqrt2-1).
\]

\begin{table}[htbp]
\centering
\caption{Selected exact canonical unconditional constants for quadratic forms on
balanced planar norming bodies.  The $\ell_p^2$ values are from \cite{GMSU}; the
regular-octagon value follows from \cite{KimOctUnc}.}
\label{tab:unconditional-balanced}
\footnotesize
\begingroup
\renewcommand{\arraystretch}{1.16}
\setlength{\tabcolsep}{5.2pt}
\begin{tabular}{@{}lcc@{}}
\toprule
Norming body & Exact constant & Decimal value\\
\midrule
$B_{\ell_1^2}$
& $\dfrac{1+\sqrt2}{2}$ & $1.2071\ldots$\\[1mm]
Full regular octagon $\widetilde{\OO}$
& $2-\sqrt2+\sqrt3(\sqrt2-1)$ & $1.3032\ldots$\\[1mm]
$B_{\ell_{3/2}^2}$
& $\dfrac12\bigl(3+5\sqrt2+2\sqrt{2+10\sqrt2}\bigr)^{1/3}$
& $1.3130\ldots$\\[1mm]
$B_{\ell_2^2}$ & $\sqrt2$ & $1.4142\ldots$\\[1mm]
$B_{\ell_3^2}$
& $\bigl(\dfrac12+\dfrac5{\sqrt2}\bigr)^{1/3}$ & $1.5921\ldots$\\[1mm]
$B_{\ell_4^2}$ & $\sqrt3$ & $1.7321\ldots$\\[1mm]
$B_{\ell_5^2}$
& $\bigl(\dfrac12+\dfrac{29}{\sqrt2}\bigr)^{1/5}$ & $1.8385\ldots$\\[1mm]
$B_{\ell_6^2}$ & $2^{1/6}5^{1/3}$ & $1.9194\ldots$\\[1mm]
$B_{\ell_8^2}$ & $17^{1/4}$ & $2.0305\ldots$\\[1mm]
$B_{\ell_\infty^2}$ & $1+\sqrt2$ & $2.4142\ldots$\\
\bottomrule
\end{tabular}
\endgroup
\end{table}

\begin{figure}[htbp]
\centering
\includegraphics[width=.98\textwidth]{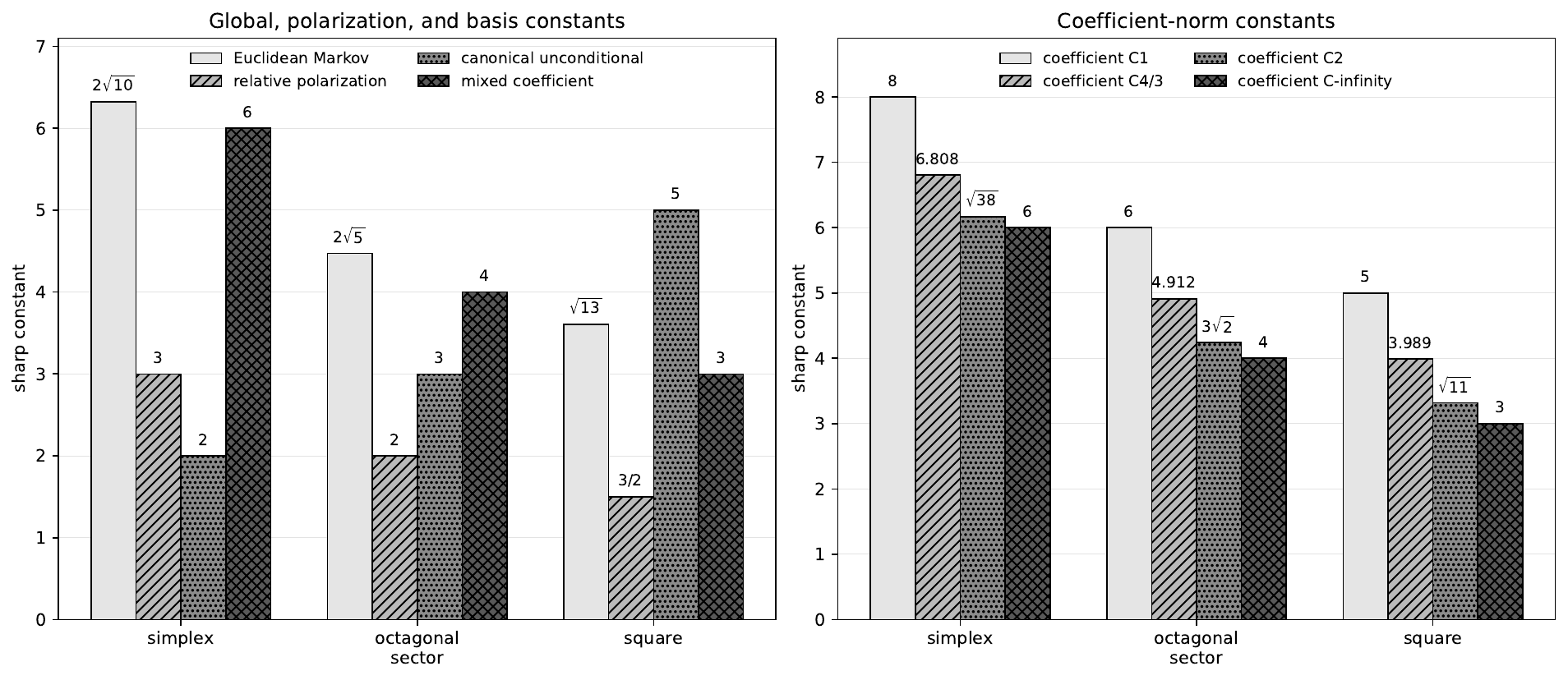}
\caption{Two complementary comparisons for the simplex, the octagonal sector,
and the square.  The left panel contains global, polarization, basis, and mixed-
coefficient constants; the right panel displays four values from the exact
coefficient scale.  In particular, $\mathcal C_{4/3}(K)$ is a fixed-space
coefficient constant of Bohnenblust--Hille type, not a classical dimension-free
Bohnenblust--Hille constant.  The figure does not assert monotonicity for arbitrary
inclusions of convex bodies.}
\label{fig:constants-comparison}
\end{figure}
%\FloatBarrier
\section{Final remarks}

The companion geometric description contains four extreme curves, their negatives,
and four isolated pairs, yet the Euclidean Bernstein problem is governed by only one
of those curves.  Its endpoint produces the global extremizer $P_*$, while beyond
slope $1/2$ the pointwise extremizer moves through the interior of the same curve.
Thus one explicit geometric family explains both a local phase transition and the
sharp global derivative constant.

The coefficient applications reveal a complementary phenomenon.  The same endpoint
polynomial simultaneously saturates the polarization inequality, every coefficient
$\ell_q$ comparison, and the unconditional inequality.  These constants measure
different aspects of the norm: for example,
\[
 \mathcal C_1(\OO)=6
 \qquad\hbox{but}\qquad
 \Unc=3.
\]
The first quantity measures the coefficient sum, whereas the second measures the
supremum norm of the modulus polynomial.  Their difference is a concrete expression
of the fact that $\normO{\cdot}$ is a non-absolute polynomial norm.

The $4/3$ estimate should be interpreted with equal care.  Its exponent is the
quadratic Bohnenblust--Hille exponent, but its constant is attached to one fixed
three-dimensional polynomial space.  Classical Bohnenblust--Hille constants are
uniform over the number of variables and hence encode an infinite-dimensional
phenomenon.  The present result is instead an exact finite-dimensional coefficient
inequality of Bohnenblust--Hille type.  Making this distinction explicit also shows
why the complete scale $\mathcal C_q(\OO)$, rather than only the value at $4/3$, is
the natural result in the current setting.

The method extends in principle whenever a low-dimensional polynomial unit ball has
finitely many parametrized extreme families.  The main work is then not the
Krein--Milman reduction itself, but the comparison of competing families and the
exact solution of the surviving one-dimensional optimizations.  For polygonal or
piecewise-algebraic bodies, this may lead simultaneously to derivative,
polarization, unconditional, and coefficient inequalities governed by a common
extreme polynomial.

\subsection*{Acknowledgements}
Manwook Han thanks the Universidad Complutense de Madrid for its hospitality during
his visit in November 2024.

\subsection*{Funding}
Han and Kim were supported by the National Research Foundation of Korea (NRF) grant
funded by the Korea government (MSIT) [NRF-2020R1C1C1A01012267].

\begingroup
\setlength{\emergencystretch}{2em}

\endgroup

\end{document}